\renewcommand\normalsize{%
	\@setfontsize\normalsize{11.7}{14pt plus .3pt minus .3pt}%
	\abovedisplayskip 10\p@ \@plus4\p@ \@minus4\p@
	\abovedisplayshortskip 6\p@ \@plus2\p@
	\belowdisplayshortskip 6\p@ \@plus2\p@
	\belowdisplayskip \abovedisplayskip}
\renewcommand\small{%
	\@setfontsize\small{9.5}{12\p@ plus .2\p@ minus .2\p@}%
	\abovedisplayskip 8.5\p@ \@plus4\p@ \@minus1\p@
	\belowdisplayskip \abovedisplayskip
	\abovedisplayshortskip \abovedisplayskip
	\belowdisplayshortskip \abovedisplayskip}
\renewcommand\footnotesize{%
	\@setfontsize\footnotesize{8.5}{9.25\p@ plus .1pt minus .1pt}
	\abovedisplayskip 6\p@ \@plus4\p@ \@minus1\p@
	\belowdisplayskip \abovedisplayskip
	\abovedisplayshortskip \abovedisplayskip
	\belowdisplayshortskip \abovedisplayskip}
\definecolor{myurlcolor}{rgb}{0,0,0.7}
\newcommand{\N}{\mathbb{N}}
\newcommand{\R}{\mathbb{R}}
\newtheorem{thm}{Theorem}[section]
\newtheorem{coro}{Corollary}[section]
\newtheorem{lem}{Lemma}[section]
\newtheorem{con}{Cons\'equence}[section]
\newtheorem{pro}{Proposition}[section]
\newtheorem{dfn}{Definition}[section]
\newtheorem{rmq}{Remark}[section]
\numberwithin{equation}{section}
\newtheorem{expl}{Example}[section]
\newtheorem{com}{Commentary}[section]
\newcommand{\bprof}{\begin{prof}}
	\newcommand{\eprof}{\end{prof}}
\newenvironment{prof}[1][Proof]{\textbf{#1.} }{\ \rule{0.5em}{0.5em}}
\newtheorem{prop}[thm]{\begin{Large} {\fontfamily{pzc} \Large  \textbf{ Propri\'et\'es}}\end{Large}}
\newtheorem{exos}[thm]{Exercice}
\newtheorem{sol}{Solution}
\newtheorem{apl}{Application}[section]
\newcommand{\beq}{\begin{eqnarray}}
\newcommand{\eeq}{\end{eqnarray}}
\newcommand{\bpro}{\begin{pro}}
	\newcommand{\epro}{\end{pro}}
\newcommand{\bapl}{\begin{apl}}
	\newcommand{\eapl}{\end{apl}}
\newcommand{\bprop}{\begin{prop}}
	\newcommand{\eprop}{\end{prop}}
\newcommand{\blem}{\begin{lem}}
	\newcommand{\elem}{\end{lem}}
\newcommand{\bdfn}{\begin{dfn}}
	\newcommand{\edfn}{\end{dfn}}
\newcommand{\bcom}{\begin{com}}
	\newcommand{\ecom}{\end{com}}
\newcommand{\bcor}{\begin{cor}}
	\newcommand{\ecor}{\end{cor}}
\newcommand{\bcoro}{\begin{coro}}
	\newcommand{\ecoro}{\end{coro}}
\newcommand{\bthm}{\begin{thm}}
	\newcommand{\ethm}{\end{thm}}
\newcommand{\bex}{\begin{expl}}
	\newcommand{\eex}{\end{expl}}
\newcommand{\brmq}{\begin{rmq}}
	\newcommand{\ermq}{\end{rmq}}
\newcommand{\bexos}{\begin{exos}}
	\newcommand{\eexos}{\end{exos}}
\newcommand{\bsol}{\begin{sol}}
	\newcommand{\esol}{\end{sol}}
\newcommand{\bcon}{\begin{con}}
	\newcommand{\econ}{\end{con}}
\newcommand{\benum}{\begin{enumerate}}
	\newcommand{\bnots}{\begin{not}}
		\newcommand{\enots}{\end{not}}
	\newcommand{\eenum}{\end{enumerate}}
\newcommand{\bitem}{\begin{itemize}}
	\newcommand{\eitem}{\end{itemize}}
\newcommand{\bea}{\begin{eqnarray}}
\newcommand{\eea}{\end{eqnarray}}
\newcommand{\enn}{\nonumber \end{equation}}
\newcommand{\beqs}{\begin{eqnarray*}}
\newcommand{\eeqs}{\end{eqnarray*}}
\newcommand{\cT}{\mathcal{T}}
\title[ Iterative method
for a nonlinear plate bending problem]
{Numerical solution and errors analysis of iterative method
	for a nonlinear plate bending problem }
\email{a) akakpowilfred@gmail.com}
\address{Institut de Mathématiques et de Sciences Physiques,
	Universit\'e d'Abomey-Calavi (UAC), Rep. of Benin}
\email{b) khouedanou@yahoo.fr}
\address{D\'epartement de Math\'ematiques, Facultés des Sciences et Techniques,
Universit\'e d'Abomey-Calavi (UAC), Rep. of Benin}
\author{ Akakpo Amoussou Wilfried$^{(a)}$
and Hou\'edanou Koffi Wilfrid$^{(b)}$ }
\begin{document}
\renewcommand{\contentsname}{Contents}
\maketitle
\begin{normalsize}
\begin{abstract}\normalsize
This paper uses the HCT finite element method and mesh adaptation technology to solve the nonlinear plate bending problem and conducts error analysis on the iterative method, including a priori and a posteriori error estimates.
 Our investigation exploits Hermite finite elements such as BELL and HSIEH-CLOUGH-TOCHER (HCT) triangles for conforming finite element discretization. Then, the existence and uniqueness of the approximation solution are proven by using a variant of the Brezzi-Rappaz-Raviart theorem. 
We solve the approximation problem through a fixed-point strategy and an iterative algorithm, and study the convergence  of the iterative algorithm, and provide the convergence conditions.
 An optimal a priori error estimation has been established. 
 We construct a posteriori error indicators by distinguishing between discretization and linearization errors and prove their reliability and optimality.
  A numerical test is carried out and the results obtained confirm those established theoretically. \\
	\\
	\small{\bf Mathematics Subject Classification [MSC]:} 74S05,74S10,74S15,
	74S20,74S25,74S30.\\
	{\bf Key Words:} Plate bending problem  $\bullet$ Isotropic discretization $\bullet$ BELL and HCT Triangles $\bullet$ Iterative method $\bullet$ A priori error estimation $\bullet$ A posteriori error estimation.
\end{abstract}
\tableofcontents
\section{Introduction}
In this paper, we are interested in a numerical solution and a priori, and a-posteriori errors analysis of iterative method
for a nonlinear plate bending problem. The numerical resolution is carried out using the HCT finite element method with mesh adaptation, allowing increased precision thanks to a local refinement based on error indicators. In indeed, the a posteriori error analysis initiated by Babuška \cite{Babuska} and improved by Verf\"{u}rth \cite{Verf} satisfies several objectives in the numerical resolution of PDEs, more precisely in the case of nonlinear PDEs. It makes it possible to globally control the discretization error of the problem posed by providing explicit bounds on the error between the numerical solution and the exact solution as soon as the approximate solution is known. This analysis can provide stopping criteria which guarantee overall error control and constitutes a basic tool for the construction of the adaptive mesh. In the context of nonlinear problems, Chaillou and Suri \cite{Chaillou1,Chaillou2} will initiate the construction of a posteriori error estimators by distinguishing between linearization and discretization errors. Then, this method will be developed within the framework of an iterative algorithm by L. El Alaoui, A. Ern \cite{Alaoui}. Another iterative method is used by C. Bernardi, Jad Dakroub, Gihane Mansour and Tony Sayah \cite{Jad} to provide us with a remarkable gain in terms of calculation time. This method is applied to the nonlinear Laplace problem.
Furthermore, an error analysis for the bilaplacian problem is addressed in the literature by P.G. Ciarlet et al. and other authors in \cite{P1,P2,P3,P4,P5,S,FB}.
Different approximation approaches are discussed after a variational formulation. We note that the variational formulation of the bilaplacian equation is simple and its conforming discretization by finite elements requires finite elements of class $C^1$ which are rather expensive due to the high polynomial degree. To overcome this difficulty, they consider a mixed discretization, or a non-conforming approximation, each having its advantages and disadvantages. Verf\"{u}rth used the three approaches with a posteriori residual error analysis in \cite{Verf}. The nonlinear case of its problems are not too addressed. We study here a nonlinear case of the plate bending problem.
{\color{red}Let $\Omega$ be a bounded open and connected of the ${\mathbb{R}}^d$ with lipschitzian boundary, $\Gamma=\partial\Omega$ the boundary of $\Omega$ , $d\in\{2,3\}$.}
We consider the plate bending nonlinear model:
\begin{equation}
\label{p}
\begin{cases}
\Delta^2 u+ \lambda|u|^{2p}u=f \text{ in } \Omega\\
u=0      \text{ on } \Gamma\\
\dfrac{\partial u}{\partial n}=0      \text{ on } \Gamma,
\end{cases} 
\end{equation}	
where $\lambda$ and $p$  are strictly positive real numbers, $f\in H^{-2}(\Omega)$ topological dual of $H^{2}_0(\Omega)$ and $n$ the unitary exterior normal at $\Gamma$. To our knowledge, there is no a posteriori error estimation result of the problem $(\ref{p})$  where a conforming finite element method is used. Here, we develop such a posteriori error analysis using isotropic (or regular) mesh. One of the main differences between our paper and the reference \cite{Jad} is that the unknown solution of the problem $(\ref{p})$ has regularity $ H_0^2(\Omega)$. This requires more regular finite elements. The standard Lagrange finite elements used in \cite{Jad} are no longer appropriate. Our investigation uses Hermite finite elements, namely BELL and HSIEH-CLOUGH-TOCHER (HCT) Triangles \cite{AR1, AR2, AR3}, which assure inclusion of our space approximation $V_h$ in $H_0^2(\Omega)$ [see \ref{vh} ].

The problem $(\ref{p})$ may arise in various contexts of mathematical physics, including solid mechanics, theory of elastic plates, differential geometry. By examining its components, it could model a physical system where an unknown $u$ is subject to forces described by the function $f$, and where deformations are subject to power-type non linearities and specific boundary conditions on the domain boundary. Thus, the equation $(\ref{p})_1$ could be applied to various scenarios where these conditions are satisfied, such as elastic plate deformations, nonlinear wave propagation phenomena. Furthermore, one may notice a similarity between this equation $(\ref{p})_1$, which are often used to describe phenomena such as liquid-liquid or solid-solid phase separation in metal alloys, grain growth in polycrystalline materials \cite{w3,w1,w2}. 
We noted that, the equation $(\ref{p})_1$ is particularly the Cahn-Hilliard equation in two dimension \cite{soglo}.

\emph{Plan of the paper.} The contents of this paper have been organized in the following manner. For the nonlinear problem (\ref{p}), we introduce a variational formulation $(\ref{fv})$ and prove the existence and uniqueness of the exact solution in section \ref{weakformulation}. In section \ref{discretisation}, we develop  a conforming discretization of the variational problem (\ref{p}) using Hermite finite elements and prove the existence and uniqueness of an approximate solution according to the Brezzi-Rappaz-Raviart theorem (cf. Theorem \ref{th}) for the approximation of nonlinear problems. Some technical results have been developed in section \ref{technique}, with fixed point strategy.
Afterwards, in section \ref{algorithme}, we use an iterative algorithm (Banach-fixed point) to make an appropriate linearization (\ref{al}) of the approximate problem (\ref{pd}), and in the section \ref{pointfixe}  we study the convergence of this algorithm towards the solution of the discrete problem. 
A priori error analysis has been deveoped in section \ref{apriori}.
 An important step is to derive a posteriori error estimates by distinguishing between linearization and discretization errors. In section \ref{aposteriori}, the a posteriori error estimates are derived. We define the error indicators (see Definition \ref{indicator}), and the upper error bound has been established in subsection \ref{upper} while the lower error bound is proved in subsection \ref{lower}. Section \ref{Numerical_results} is devoted to numerical results. Its subsection \ref{Numerical} presents the numerical solution after convergence, while subsection \ref{priori_estimation} shows the evolution of the error at each mesh size. Subsection \ref{posteriori_estimation} shows us the behavior of local error indicators with errors. An adaptation of meshes according to these errors indicators is presented in the  subsection \ref{adapt}. In the section \ref{resultats}, we analyze the performance of the developed method through numerical results, highlighting its accuracy, robustness and relevance for solving complex problems. We offer our conclusion and the further works in section \ref{Conclusion}.
\section{Weak formulation}\label{weakformulation}
We describe in this section the nonlinear problem (\ref{p}) together with its variational formulation and we proof existence and uniqueness of exact solution of the nonlinear problem.
First of all, we recall the main notion and results which we use later on. If $\mathcal{O}$ is a bounded open domain of $\mathbb{R}^d$ , we denote by $L^p(\mathcal{O})$
the space of measurable functions summable with power $p$.  For $v\in L^p(\mathcal{O})$ , 
the norm is defined by,
 \begin{equation}
\|v\|_{L^p(\mathcal{O})}=\left[\int_{\mathcal{O}}|v(x)|^pdx\right]^{\frac{1}{p}}.
\end{equation}
Now, we introduce some Sobolev spaces and norms \cite{Brezis}. 
Let $m\in \N$  and $\ d\in \N^*$ where $\mathbb{N}$ is the set of natural numbers, and $\mathbb{N}^*$ is the set of nonzero natural numbers, $1\leqslant p< \infty$, a real number. The Sobolev space $W^{m,p}(\mathcal{O})$ is defines by:
\begin{equation*}
W^{m,p}(\mathcal{O}):=\left\lbrace v\in {L^p(\mathcal{O})}:D^{\alpha}v\in{L^p(\mathcal{O})}\  \forall \alpha=(\alpha_1,\alpha_2,\ldots,\alpha_d)\in \N^d : \sum_{i=1}^{d}\alpha_i=|\alpha|\leqslant m \right\rbrace,
\end{equation*}
with the norm:
\begin{equation}
\|v\|_{W^{m,p}(\mathcal{O})}:=\left[\sum_{|\alpha|\leqslant m}\|D^{\alpha}v\|^p_{L^p(\mathcal{O})}\right]^{\frac{1}{p}} , \ 1\leqslant p<\infty\ \forall v\in W^{m,p}(\mathcal{O}).
\end{equation}
The Sobolev space $H^m(\mathcal{O})=W^{m,2}(\mathcal{O})$ is defined in the usual way with the usual norm $\parallel\cdot\parallel_{m,\mathcal{O}}$ and semi-norm $|\cdot|_{m,\mathcal{O}}$. In particular, 
$H^0(\mathcal{O})=L^2(\mathcal{O})$ and we write $\parallel\cdot\parallel_{\mathcal{O}}$ for $\parallel\cdot\parallel_{0,\mathcal{O}}$. Similarly we denote by $(\cdot,\cdot)_{\mathcal{O}}$ the $L^2(\mathcal{O})$ inner product. For shortness if $\mathcal{O}$ is equal to $\Omega$, we will drop the index $\Omega$. The space $H_0^m(\mathcal{O})$ denotes the closure of $\mathcal{C}_{c}^{\infty}(\mathcal{O})$ in $H^{m}(\mathcal{O})$ and $H^{-m}(\mathcal{O})$ is topological dual space of $H_0^m(\mathcal{O})$, equipped with the norm:
 \begin{equation}
\|z\|_{-m,\mathcal{O}}= \sup_{{u\in H_0^{m}(\mathcal{O})}-\{0\}}\dfrac{|\langle z,u\rangle|}{\|u\|_{m,\mathcal{O}}}  \quad \forall z\in H^{-m}(\mathcal{O}).
\end{equation}
{\color{red}If the open domain $\mathcal{O}$ is  bounded, connected and has a lipschitzian boundary, then for $m=2$, the map $u\mapsto \|\Delta u\|_{L^2(\mathcal{O})}$ is a norm equivalent to $|\cdot|_{2,\mathcal{O}}$ on $H_0^2(\mathcal{O})$ \cite{luq}.}
 Moreover, for all integer $m\geqslant 0$ and for all $p\in [1,+\infty[$, we have the following inclusions with continuous injections \cite[Chapter 3]{Adams}: 
$ W^{m,p}(\mathcal{O})\hookrightarrow L^{p^*}({\mathcal{O}})\text{ if } m<\tfrac{d}{p}$, 
$W^{m,p}(\mathcal{O}) \hookrightarrow L^{q}({\mathcal{O}}),  q\geq 1\text{ if } m=\tfrac{d}{p}$ and 
$ W^{m,p}(\mathcal{O}) \hookrightarrow \mathcal{C}^0(\bar{\mathcal{O}})\text{ if } m>\tfrac{d}{p}, \mbox{ with } p^*=\tfrac{pd}{d-mp}$.\\ 
{\color{red}
\textbf{Recurrent notation.}
	In the sequel, we denote by $C$, $C'$, $c_1$, $c_2$, $c_1'$, $\ldots$, generic constants that can vary from line
	to line but are always independent of all discretization parameters.}\\
We have the following lemma:
\begin{lem}(cf. \cite{Brezis})\label{ijs}
	 For all $p\in \ ]1,+\infty[$,  there exists a strictly positive real constant $C_p$ such that : 
	\begin{equation}
	\sup_{x\in \bar{\mathcal{O}}}|v(x)|\leqslant C_p\|v\|_{W^{m,p}(\mathcal{O})}, \forall v\in W^{m,p}(\mathcal{O}), \mbox{ if } m>\frac{d}{p}.
	\end{equation}
\end{lem}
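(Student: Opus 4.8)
The statement is the supercritical Sobolev embedding: when $m > d/p$, the space $W^{m,p}(\mathcal{O})$ injects continuously into $C^0(\bar{\mathcal{O}})$, and the asserted inequality is exactly the continuity of this embedding. The plan is to reduce the problem to the whole space $\R^d$ and then combine the Gagliardo--Nirenberg--Sobolev inequality with Morrey's inequality.

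First I would pass to $\R^d$. Since $\mathcal{O}$ is bounded with Lipschitz boundary, there is a bounded linear extension operator $E : W^{m,p}(\mathcal{O}) \to W^{m,p}(\R^d)$ satisfying $\|Ev\|_{W^{m,p}(\R^d)} \le C_E \|v\|_{W^{m,p}(\mathcal{O})}$ and $(Ev)|_{\mathcal{O}} = v$. It therefore suffices to bound $\sup_{x \in \R^d}|(Ev)(x)|$ by $\|Ev\|_{W^{m,p}(\R^d)}$; restricting back to $\bar{\mathcal{O}}$ and absorbing the factor $C_E$ yields the claim. By density of $C_c^{\infty}(\R^d)$ in $W^{m,p}(\R^d)$ it is enough to prove the estimate for smooth functions and then pass to the limit.

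Next, on $\R^d$, I would raise the integrability exponent by iterating the Gagliardo--Nirenberg--Sobolev inequality $\|w\|_{L^{q^*}} \le C\|\nabla w\|_{L^{q}}$ with $q^* = qd/(d-q)$, valid whenever $q < d$. Setting $1/q_j = 1/p - j/d$, each application trades one order of differentiability for higher integrability, so the derivatives of order $m-j$ lie in $L^{q_j}$. Because $mp > d$, after finitely many steps the exponent reaches a value $q_k > d$ while at least one derivative remains ($m-k \ge 1$), at which stage Morrey's inequality $\sup_x |w| \le C\|w\|_{W^{1,q_k}(\R^d)}$ (valid for $q_k > d$) supplies the uniform bound. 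Chaining the intermediate embedding constants back to $\|\cdot\|_{W^{m,p}}$ then produces the constant $C_p$.

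The main obstacle is the bookkeeping in this iteration, and in particular the borderline case in which an intermediate exponent equals $d$ exactly: there $W^{1,d}$ fails to embed into $L^{\infty}$, so Morrey cannot be applied directly. I would circumvent this by using that $W^{1,d}(\R^d) \hookrightarrow L^r(\R^d)$ for every finite $r$; the strict inequality $m > d/p$ guarantees that in this situation a further derivative is left over (one checks $m-k \ge 2$), so one may choose some $r > d$, gain integrability once more, and then invoke Morrey for the exponent $r$. Collecting $C_E$ together with all the embedding constants, each depending only on $m$, $p$, $d$ and $\mathcal{O}$, gives the desired constant $C_p$.
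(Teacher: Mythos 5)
The paper does not prove this lemma: it is quoted as a classical Sobolev embedding with only a citation to Brezis, so there is no in-paper argument to compare against. Your proof (extension to $\mathbb{R}^d$ via a bounded extension operator on the Lipschitz domain, iterated Gagliardo--Nirenberg--Sobolev to raise the exponent, then Morrey, together with the correct observation that the strict inequality $m>d/p$ leaves a spare derivative available at the borderline exponent $q=d$) is correct and is essentially the standard proof found in the cited reference.
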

The nonlinear model problem (\ref{p}) admits the equivalent variational formulation: find $u\in V:=H_0^2(\Omega)$ such that,
\begin{equation}\label{fv}
\int_{\Omega}\Delta u(x)\Delta v(x)dx+\lambda\int_{\Omega}|u|^{2p}u(x)v(x)dx=\langle f,v\rangle_{H^{-2}(\Omega)}, \forall v\in V.
\end{equation}
	Our problem being nonlinear, the Lax-Milgram theorem cannot be used to show the existence and uniqueness of solution.
	We will therefore resort to the minimization results \cite{kavian}. 
	\bthm (ref. \cite{kavian})
	\label{a}
	Let $H$ be a reflexive Banach space, $K$ a closed convex of $H$ and $J : K \to \R$ a convex function 
	lower semi-continuous (abbreviated s-c-i) if $K$ is unbounded, suppose that for any sequence $(u_n) _n$ of $K$ such that $\|u_n\|\to +\infty$, when $n \to +\infty$, we have $J(u_n)\to +\infty$. Then $J$ reaches its minimum on $K$ :
	\begin{equation}
	\exists u\in K,  J(u)=\inf_{v\in K}J(v)= \min_{v\in K}J(v).
	\end{equation}
	Moreover if $J$ is strictly convex, $u$ is unique.
	\ethm
	We introduce the following definition:
	\begin{dfn}
	The functional (a map from a vector space of functions to its scalar body) energy associated with the nonlinear model problem $(\ref{p})$ is defined by:
	\begin{equation}\label{j}
	J(u)=\dfrac{1}{2}\int_{\Omega}|\Delta u|^2dx+\dfrac{\lambda}{2p+2}\int_{\Omega}|u|^{2p}[u(x)]^2dx-\langle f,u\rangle_{H^{-2}(\Omega)},\forall u\in H_0^2(\Omega).
	\end{equation}
\end{dfn}
\begin{rmq} (Differential of $J$) Since
 $\mathcal{C}_c^{\infty}(\Omega)$ is dense in $H^{2}_0(\Omega)$ for the
	norm of $H^{2}(\Omega)$,
	then
	\begin{equation*}
	J'(u)=0\text{ in } [\mathcal{C}_c^{\infty}(\Omega)]' \mbox{ equivalently to }\Delta^2(u)+\lambda|u|^{2p}u-f =0 \text{ in } [\mathcal{C}_c^{\infty}(\Omega)]'. 
	\end{equation*}
\end{rmq}
Now, we can proof the existence and uniqueness of exact solution of the nonlinear problem (\ref{p}). We have the following result:
\begin{thm}\label{eu}
The nonlinear plate bending problem $(\ref{p})$ admits a unique solution $u\in H^{2}_0(\Omega).$
\end{thm}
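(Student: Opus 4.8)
The plan is to realize the weak solution of $(\ref{fv})$ as the unique minimizer of the energy functional $J$ defined in $(\ref{j})$, and to obtain existence and uniqueness by applying the minimization Theorem \ref{a} with $H=K=V=H_0^2(\Omega)$. Since $V$ is a Hilbert space it is reflexive, and the choice $K=V$ is trivially closed and convex, so the structural hypotheses on $H$ and $K$ are immediate. The first point to settle is that $J$ is finite-valued on $V$: the only nonobvious term is $\frac{\lambda}{2p+2}\int_\Omega |u|^{2p+2}\,dx$. Because $d\in\{2,3\}$ we have $2>\tfrac{d}{2}$, so Lemma \ref{ijs} (equivalently the embedding $H^2(\Omega)\hookrightarrow \mathcal{C}^0(\bar{\Omega})$) gives $V\hookrightarrow L^\infty(\Omega)$; hence $u\in L^q(\Omega)$ for every $q$ and all three terms of $J$ are well defined and finite.

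Next I would establish strict convexity. The quadratic term $u\mapsto \frac12\|\Delta u\|_{L^2(\Omega)}^2$ is strictly convex on $V$ because, by the equivalence recalled above (cf.\ \cite{luq}), $u\mapsto\|\Delta u\|_{L^2(\Omega)}$ is a norm on $H_0^2(\Omega)$. The scalar map $t\mapsto |t|^{2p+2}$ is convex on $\R$ (it is $C^2$ with nonnegative second derivative since $2p+2>1$), so $u\mapsto \frac{\lambda}{2p+2}\int_\Omega |u|^{2p+2}\,dx$ is convex, while $-\langle f,u\rangle$ is affine. A sum of a strictly convex functional with convex and affine functionals is strictly convex, so $J$ is strictly convex on $V$.

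I would then verify lower semicontinuity and coercivity. Each term of $J$ is continuous on $V$ for the $H^2$-topology (the nonlinear term again via $V\hookrightarrow L^{2p+2}(\Omega)$), so $J$ is continuous; being moreover convex it is weakly lower semicontinuous, which supplies the s-c-i hypothesis of Theorem \ref{a}. For coercivity, the nonlinear term is nonnegative, so
\begin{equation*}
J(u)\ \geq\ \tfrac12\|\Delta u\|_{L^2(\Omega)}^2-\|f\|_{-2,\Omega}\,\|u\|_{2,\Omega}.
\end{equation*}
Using the norm equivalence $\|\Delta u\|_{L^2(\Omega)}\simeq |u|_{2,\Omega}$ together with the Poincaré inequality on $H_0^2(\Omega)$ (which makes $|\cdot|_{2,\Omega}$ a norm equivalent to $\|\cdot\|_{2,\Omega}$), the right-hand side is bounded below by $c\|u\|_{2,\Omega}^2-\|f\|_{-2,\Omega}\|u\|_{2,\Omega}$, which tends to $+\infty$ as $\|u\|_{2,\Omega}\to+\infty$. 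Thus all hypotheses of Theorem \ref{a} are met, yielding a minimizer, and strict convexity forces it to be unique.

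It remains to identify this minimizer with the solution of the problem. Since $J$ is Gâteaux-differentiable on $V$, its unique minimizer $u$ is the unique point where $J'(u)=0$; writing out $\langle J'(u),v\rangle=0$ for all $v\in V$ reproduces exactly the variational formulation $(\ref{fv})$, which is equivalent to $(\ref{p})$. Hence $(\ref{p})$ admits one and only one solution in $H_0^2(\Omega)$. The main technical care lies in the well-posedness and differentiability of the nonlinear term, where the restriction $d\in\{2,3\}$ is essential to keep $J$ finite and smooth through the $H^2\hookrightarrow L^\infty$ embedding; the convexity and coercivity estimates are then routine.
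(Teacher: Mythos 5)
Your proposal is correct and follows essentially the same route as the paper: both apply the minimization Theorem \ref{a} to the energy functional $J$ of $(\ref{j})$ on $K=V=H_0^2(\Omega)$, establishing strict convexity, coercivity and lower semicontinuity, and then identifying the unique minimizer with the solution of $(\ref{fv})$. The only cosmetic differences are that you place the strict convexity on the quadratic term $\frac12\|\Delta u\|_{L^2(\Omega)}^2$ rather than on the $|u|^{2p+2}$ term as the paper does, and you obtain coercivity by discarding the nonnegative nonlinear term instead of via Young's inequality; both variants are valid.
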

\begin{proof}
	For the proof of Theorem \ref{eu}, we use Theorem \ref{a} by verifying each  assumption. The closed space
	 $V=H^{2}_0(\Omega)$ then $V$ is convex. Let us show the energy functional $J$ defines by (\ref{j})  is convex.
	 Let's pose 
	 \begin{eqnarray}
	 g_1(u)&=&\displaystyle\int_{\Omega}|\Delta u (x)|^2dx,\\
	 g_2(u)&=&\displaystyle\int_{\Omega}|u(x)|^{2p}[u(x)]^2dx,\\
	 	 g_3(u)&=&\langle f,v\rangle_{H^{-2}(\Omega)\times H_0^2(\Omega)}.
	 \end{eqnarray}
The function $g_3$ is a linear form. Therefore it is convex and $g_2$ is strictly
	convex, because the function $x\mapsto |x|^{2p+2}$ is strictly convex.
	Let $u,v \in V$ and $t\in[0,1]$ we have: $(1-t)u+tv\in H_0^2(\Omega)$ and 
	\begin{eqnarray*}
	g_1((1-t)u+tv)&=&\displaystyle\int_{\Omega}|\Delta ((1-t)u+tv)|^2dx,\\
	&\leqslant&\displaystyle\int_{\Omega}\left[(1-t)|\Delta u|+t|\Delta v|\right]^2dx.
	\end{eqnarray*}
Using the Cauchy-Schwarz inequalities, we obtain $
g_1((1-t)u+tv)\leqslant (1-t)g_1(u)+tg_1(v)
$ and $g_1$ is convex. 
We deduce from everything that $J$ is strictly convex as the sum of convex and strictly convex functions.
 Let us show that $J$ is coercive. Since 
$\displaystyle\int_{\Omega}fudx\leq\left|\displaystyle\int_{\Omega}fudx\right|\leqslant \|f\|_{-2,\Omega}\|u\|_{2,\Omega}\leqslant \frac{s}{2}\|u\|^2_{2,\Omega}+\frac{1}{2s}\|f\|^2_{-2,\Omega},$(Young inequality)
then
$-\displaystyle\int_{\Omega}fudx\geqslant -\frac{s}{2}\|u\|^2_{2,\Omega}-\frac{1}{2s}\|f\|^2_{-2,\Omega}.$
We deduce that,
\begin{equation*}
\begin{array}{ccc}
J(u)&\geqslant& \min\left(\dfrac{1}{4},\dfrac{\lambda}{2p+2}\right)\left[\| u\|^2_{2,\Omega}+\|u\|^{2p+2}_{L^{2p+2}(\Omega)}\right]-\|f\|^2_{-2,\Omega} \text{ for } s=\frac{1}{2}.
\end{array}
\end{equation*}
Hence the coercivity.
Furthermore, the energy functional being differentiable for everything  $ v\in H_0^2(\Omega)$, then it is continuous on $H_0^2(\Omega)$ and consequently s-c-i on $H_0^2(\Omega)$.
We conclude, according to Theorem \ref{a}, the nonlinear problem $(\ref{p})$ admits a unique solution in $H_0^2(\Omega)$.
\end{proof}
\section{Finite element discretization}\label{discretisation}
\subsection{Discrete problem}
Let $h>0$ be a real parameter.
The objective here is to replace the 
space $V$ by a vector subspace of finite dimension $V_h$ called the space of approximation. This is the
continuous Galerkin method.  For construction of $H_0^2(\Omega)$-conforming approximation $V_h$ [cf. (\ref{vh}) below], we use the BELL and HSIEH-CLOUGH-TOCHER elements \cite{BH} that we present in section \ref{Bell}.
The discrete problem (\ref{pd})  associated to  the plate bending nonlinear problem $(\ref{p})$ is as follows: find $u_h\in V_h$ such that,
\begin{equation}\label{pd}
\int_{\Omega}\Delta u_h(x)\Delta v_h (x)dx+\int_{\Omega}\lambda|u_h(x)|^{2p}u_h(x)v_h(x)dx=\langle f,v_h\rangle \ \forall v_h\in V_h.
\end{equation}
In order to prove the existence and uniqueness of a solution to problem (\ref{pd}), we use the Brezzi-Rappaz-Raviart theorem (cf. \cite{Brezzi}). This theorem will also allow us to control the corresponding a priori error estimator.
Before stating the theorem, we specify certain hypotheses.
Let $X$ and $Y$ be two Banach spaces.
We introduce a class $C^1$ map, $G : X\to Y$ and  continuous linear map $S\in \mathcal{L}(Y,X)$. We put for all  $u\in X,$
\begin{equation}
\label{F'}
F(u)=u-S\circ G(u).
\end{equation}
We consider a finite element approximation of a solution  $u\in X$ of the problem (\ref{F}) below:
\begin{equation}
\label{F}
F(u)=0 \text{ i.e. } u-S\circ G(u)=0.
\end{equation}
For $h>0$ a real, we give ourselves a finite-dimensional vector subspace $X_h$ of $X$ and an associated operator $S_h\in \mathcal{L}(Y, X_h)$ . The approximate problem of $(\ref{F})$ consists of finding $u_h\in X_h$ solution to the equation :
\begin{equation}
\label{Fh}
 F_h(u_h)=0, \mbox{ where } F_h(u_h)=u_h-S_h\circ G(u_h).
\end{equation}
Then, we have the following theorem (cf. \cite{Brezzi}):

\bthm\cite{Brezzi}
\label{th}
We assume that $G$ is a map of class $C^1$ from $X$ to $Y$ such that $G'$ is lipschitz continuous on the bounded subsets of $X$, i.e. there exists a function $L: \mathbb{R}_{+}\times\mathbb{R}_{+}\to \mathbb{R}_{+}$ monotonically increasing with respect to each variable such that for all $u,w\in X, \|G'(u)-G'(w)\|_{\mathcal{L}(X,Y)}\leqslant L(\|u\|_{X},\|w\|_{X})\|u-w\|_{X}$, $S\circ G'(u) \in\mathcal{L}(X)$ is compact and $F'(u)$ is an isomorphism of $X$.
Furthermore, we assume that for all $v\in X$,
\begin{equation}
\label{}
\lim\limits_{h\to 0}\|v-\Pi_hv\|_X=0,
\end{equation}
for a certain linear operator $\Pi_h\in \mathcal{L}(X,X_h)$ and
\begin{equation}
\label{}
\lim\limits_{h\to 0}\|S_h-S\|_{\mathcal{L}(Y,X)}=0.
\end{equation}
Then, there exists $h_0>0$ and an open neighborhood  $\mathcal{O}$ of the origin in $X$ such that, for each $h\leqslant h_0,$ the problem $(\ref{F})$ admits a unique solution $u_h$ satisfying $u_h-u\in \mathcal{O}$. Furthermore, we have, for a certain constant $M>0$ independent of $h$, the estimate:
\begin{equation}
\label{}
\|u_h-u\|_X\leqslant M(\|u-\Pi_h u\|_X + \|(S_h-S)\circ G(u)\|_X).
\end{equation}
\ethm
In this paper,  we apply the theorem \ref{th} to problem $(\ref{pd})$, namely (\ref{Fh}). To do this,
we need some technical results. 

\subsection{Fixed-point strategy and Some technical results}\label{technique}
\subsubsection{Fixed-point strategy}
 Let $X=H^{2}_0(\Omega)$
and  $Y=H^{-2}(\Omega)$. For the application $G : X\to Y$ and the map linear continuous $S\in \mathcal{L}(Y,X)$ of Theorem \ref{th}, we define:

\begin{eqnarray}\label{g}
G (w)=f-\lambda|w|^{2p}w, \forall w\in X
\end{eqnarray}
and 
\begin{equation}\label{s}
\begin{tabular}{cccc}
$S : $&$Y$&$\to$&$X$\\
&$\varphi$&$\mapsto$&$S(\varphi)$
\end{tabular},
\end{equation}
such that if $S(\varphi)=w$, then $w$ is the solution to the problem: 
\begin{equation}
\label{p'}
 \begin{cases}
\Delta^2 w=\varphi \text{ in } \Omega\\
w=\dfrac{\partial w}{\partial n}=0 \text{ on } \Gamma.
\end{cases}
\end{equation}
We noted that, the problem $(\ref{p'})$ admits a unique $w\in H_0^2(\Omega)$ solution to the equivalent variational problem:
\begin{equation}
\displaystyle\int_{\Omega}\Delta w(x) \Delta v(x) dx=\int_{\Omega} \varphi(x) v(x)dx, \forall v\in H_0^2(\Omega),
\end{equation}
and $S\in\mathcal{L}(Y,X)$ satisfies the inequality:
\begin{equation}
	\parallel S(\varphi)\parallel_{2,\Omega} \leq  \parallel \varphi \parallel_{-2,\Omega}. \forall \varphi \in H^{-2}(\Omega).
\end{equation}
Let introduce the following proposition:
\bpro  \label{pro1}
Let $G$ define by $(\ref{g})$ and $S$ define by $(\ref{s})$. Then, the problems $(\ref{p})$ and $(\ref{F})$ are equivalent.
\epro
\begin{proof}
	Assume that $u\in X$ is solution of $(\ref{p})$. Then, $u$ check
	$\lambda|u|^{2p}u= -\Delta^2 u+f$. 
	From the definition of $S$  and $G$, that is, $(\ref{g})$ and $(\ref{s})$,  we obtain
	$f-G(u)=0.$
	As $f-G(u)\in Y=H^{-2}(\Omega)$ and $S$ is linear, composing the above equality by $S$, we deduce 
	$S[f-G(u)]=0$ namely, $u-S\circ G(u)=0$, and 
	we have  $F(u)=0$, since $S(f)=u$.
	Reciprocally, assume that $u-S\circ G(u)=0$.
	From the definition of $S$ we have $S\circ G(u)=u$, and therefore by $(\ref{s})$, 
	$
	\Delta^2 u=G(u) \text{ in } \Omega \mbox{ with }
	u=0=\dfrac{\partial u}{\partial n} \text{ on } \Gamma.
	$
	The definition of $G$, i.e. $(\ref{g})$ conclude the result.
	
\end{proof}
\subsubsection{Some technical results}
\bpro (Technical inequality)\label{pro2}
\label{le}
Let $a,b$ and $s$ be three strictly positive real numbers. Then, we have the following inequality:
\begin{equation}
\label{IN}
|a^s-b^s|\leqslant s|a-b|\left[a^{s-1}+b^{s-1}\right].
\end{equation}
\epro
\begin{proof} 
	Let $a,b$ and $s$ be three elements of $]0,+\infty[$.
	Now, we  consider the function $\varphi\in \mathcal{C}^1([a,b],\mathbb{R})$,
	$\varphi :[a,b]\rightarrow \R$ define by
	$\varphi(x)=x^s$ . Then,  there exists  $c\in]a,b[$ constant such that $\varphi(a)-\varphi(b)=(a-b)\varphi'(c)$. 
	But $\varphi'(x)=sx^{s-1}$,  so we have
	$
	|a^s-b^s|=s|a-b||c^{s-1}|
	\leqslant s|a-b|\displaystyle\sup_{t\in [a,b]}|t^{s-1}|$. Hence
	$|a^s-b^s|\leqslant s|a-b|\left[a^{s-1}+b^{s-1}\right].$
	\end{proof}\mbox{ }\\
To be able to verify the assume of Theorem \ref{th}, we need to prove some important properties satisfy by $G$ define in $(\ref{g})$ and $S$ define in $(\ref{s})$ that will be used later.
The first lemma proves that the differential $G': H_0^2(\Omega)\to \mathcal{L}(H_0^2(\Omega),H^{-2}(\Omega)) $ is Lipschitz continuous on the bounded subsets of $H_0^2(\Omega)$.

\blem Let $G$ be the application define by (\ref{g}). Then, \\$G': H_0^2(\Omega)\to \mathcal{L}(H_0^2(\Omega),H^{-2}(\Omega))$  is Lipschitz continuous on the bounded subsets of $H_0^2(\Omega)$, where $G'$ is the differential of $G$.
\elem
\begin{proof} We can recall that $X:= H_0^2(\Omega)$ and  $Y=H^{-2}(\Omega)$. Now, let $u\in X$.
	It is well known that
	\begin{tabular}{cccc}
		$G(u)=f-\lambda|u|^{2p}u$
	\end{tabular}.
	The application $G$ is differentiable and its differential at $u\in X$ is given for $v\in X$ by:
	$G'(u)\cdot v=-\lambda(2pvu^{2p-1}u+u^{2p}v)$,  namely
	$G'(u)\cdot v=-\lambda(2p+1)|u|^{2p}v \ \forall\  v\in X.$
	To show that $G'$ is Lipschitz continuous on the bounded subsets of $H_0^2(\Omega)$, we will look for $L\in \mathbb{R}_{+}$ such that for all $(u,w)\in X^2$ we have
	$\|G'(u)-G'(w)\|_{\mathcal{L}(X,Y)}\leqslant L\|u-w\|_X.$
	Let's remember that:
	\begin{equation}
	\label{}
	\|G'(u)-G'(w)\|_{\mathcal{L}(X,Y)}=\sup_{v\in X^*}\dfrac{\|(G'(u)-G'(w))\cdot v\|_Y}{\|v\|_X},
	\end{equation} 
	and
	\begin{equation}
	\label{}
	\|(G'(u)-G'(w))\cdot v\|_Y=\sup_{z\in X^*}\dfrac{|\langle(G'(u)-G'(w))\cdot v),z\rangle|}{\|z\|_X}.
	\end{equation} 
	$\begin{array}{ccc}
	(G'(u)-G'(w))\cdot v&=&G'(u)\cdot v-G'(w)\cdot v\\\\  
	&=&-\lambda(2p+1)|u|^{2p}v+\lambda(2p+1)|w|^{2p}v\\\\
	&=&-\lambda(2p+1)(|u|^{2p}-|w|^{2p})\cdot v\\\\
	\langle (G'(u)-G'(w))\cdot v , z\rangle &=&\displaystyle \int_{\Omega}\big(-\lambda(2p+1)(|u|^{2p}-|w|^{2p})\cdot v\big) zdx\\\\
	&=& -\lambda(2p+1)\displaystyle\int_{\Omega}[(|u|^{2p}-|w|^{2p})\cdot v] zdx.         
	\end{array}$\\
	Let's calculate $\displaystyle\int_{\Omega}[(|u|^{2p}-|w|^{2p})\cdot v]zdx$.
	According to inequality \ref{IN} of Proposition \ref{pro2} and the Lemma \ref{ijs}, we have 
	$|u|^{2p}-|w|^{2p}\leqslant ||u|^{2p}-|w|^{2p}|$, namely \\
	$|u|^{2p}-|w|^{2p}\leqslant 2p|u-w|(|u|^{2p-1}+|w|^{2p-1}).$ 
	Now,
	\begin{eqnarray*}
		\displaystyle\int_{\Omega}[(|u|^{2p}-|w|^{2p})\cdot v] zdx&\leqslant&2p\int_{\Omega}|u-w|(|u|^{2p-1}+|w|^{2p-1})| v||z|dx,\\
		&\leqslant&2p\left[c_1|u-w|_{2,\Omega}(2c_2^{(2p-1)}(\|u\|^{2p-1}_{2,\Omega}+\|w\|^{2p-1}_{2,\Omega})) c_3|v|_{2,\Omega} c_4|z|_{2,\Omega}\right],\\
	\end{eqnarray*}
	which implies that,
	$$\int_{\Omega}[(|u|^{2p}-|w|^{2p})\cdot v] zdx\leqslant 4pc_1c^{(2p-1)}_2c_3c_4(\|u\|^{2p-1}_{2,\Omega}+\|w\|^{2p-1}_{2,\Omega})|u-w|_{2,\Omega}|z|_{2,\Omega}|v|_{2,\Omega}.$$
	Either,
	$$\langle G'(u)-G'(w))\cdot v), z\rangle \leqslant -4\lambda pc_1c^{(2p-1)}_2c_3c_4(\|u\|^{2p-1}_{2,\Omega}+\|w\|^{2p-1}_{2,\Omega})|u-w|_{2,\Omega}|z|_{2,\Omega}|v|_{2,\Omega},$$
	therefore,
	$$\dfrac{|\langle (G'(u)-G'(w))\cdot v, z\rangle|}{\|z\|_X}\leqslant 4\lambda pc_1c^{(2p-1)}_2c_3c_4(\|u\|^{2p-1}_{2,\Omega}+\|w\|^{2p-1}_{2,\Omega})|u-w|_{2,\Omega}|v|_{2,\Omega} \ \forall z\in X^*.$$
	By setting $L= L(\|u\|_{2,\Omega},\|w\|_{2,\Omega})=4\lambda pc_1c^{(2p-1)}_2c_3c_4(\|u\|^{2p-1}_{2,\Omega}+\|w\|^{2p-1}_{2,\Omega})\in \mathbb{R}_{+} $ we can arrive at
	$$\sup_{z\in X^*}\dfrac{|<(G'(u)-G'(w))\cdot v,z>|}{\|z\|_X}\leqslant L|u-w|_{2,\Omega}|v|_{2,\Omega},$$
	i.e
	$$\|(G'(u)-G'(w))\cdot v\|_Y\leqslant L|u-w|_{2,\Omega}|v|_{2,\Omega},$$
	which implies that
	$$\dfrac{\|(G'(u)-G'(w))\cdot v\|_Y}{\|v\|_X}\leqslant L|u-w|_{2,\Omega}.$$
	Hence,
	$$\sup_{v\in X^*}\dfrac{\|(G'(u)-G'(w))\cdot v\|_Y}{\|v\|_X}\leqslant L|u-w|_{2,\Omega}.$$
	Thus, we have 
	$\|G'(u)-G'(w)\|_{\mathcal{L}(X,Y)}\leqslant L|u-w|_{2,\Omega}.$ Since $L$ is bounded with respect to each variable $u$ and $w$ on the bounded subsets of $H_0^2(\Omega)$, then $G'$ satisfies the uniform Lipschitz condition. 
	This completes the proof.
\end{proof}
The second property prove that the  map $S\circ G'(u) : X\to X$ is linear continuous and compact on $X$, where $u\in X$.
\blem
\label{lcom}
Let $u\in X=H_0^2(\Omega)$.
The map $S\circ G'(u) : X\to X$ is linear continuous and compact on $X$, where $G$ and $S$ are  defined by (\ref{g}) and (\ref{s}) respectively.
\elem

\begin{proof}
$S$ and $G'(u)$ are two continuous linear maps on $Y$ and $X$ respectively.
Then their composite is a continuous linear map. Let us show that $S\circ G'(u)$ is compact.
Let $B_X$ be the unit ball of $X$. Then, we proof that $G'(u)(B_X)$ is relatively compact.
$B_X$ being bounded and $G'(u)$ continuous, then $G'(u)(B_X)$ is a bounded part of $Y$.
Let us show that $G'(u)(B_X)=\mathcal{H}$ is uniformly equicontinuous.\\
For $\varepsilon>0$, let's search $\alpha>0$ such that $\forall (w_1,w_2)\in X^2$, and $v\in B_X$ we have,
$\|w_1-w_2\|_X\leqslant\alpha$ implies  $|\langle G'(u)\cdot v, w_1\rangle-\langle G'(u)\cdot v, w_2\rangle|\leqslant \varepsilon.$ We have
\begin{eqnarray*}
|\langle G'(u)\cdot v, w_1\rangle-\langle G'(u)\cdot v, w_2\rangle|&=&|\langle G'(u)\cdot v, w_1-w_2\rangle|.\\
\end{eqnarray*}
It is well known that,  $G'(u)\cdot v=-\lambda(2p+1)|u|^{2p}v$.
Hence,
\begin{eqnarray*}
|\langle G'(u)\cdot v,w_1-w_2\rangle| &\leqslant& \lambda(2p+1)|\int_{\Omega}|u|^{2p}v(w_1-w_2)dx|,\\
&\leqslant& \lambda (2p+1)\int_{\Omega}||u|^{2p}v||w_1-w_2|dx,\\
&\leqslant& \lambda \mbox{mes}(\Omega)(2p+1)c_1c_2\|u\|_{L^{2p}(\Omega)}^{2p}|v|_{2,\Omega}|w_1-w_2|_{2,\Omega}.
\end{eqnarray*}
By setting $k'=\lambda mes(\Omega)(2p+1)c_1c_2\|u\|_{L^{2p}(\Omega)}^{2p}> 0$ we obtain,
$$ |\langle G'(u)\cdot v,w_1-w_2\rangle|\leqslant k'\|v\|_{X}|w_1-w_2|_{2,\Omega},$$ 
which implies,
$ |\langle G'(u)\cdot v,w_1-w_2\rangle|\leqslant k'|w_1-w_2|_{2,\Omega}$, since $v\in B_X$, i.e. 
$\parallel v\parallel_X\leq 1$.
So that $|\langle G'(u)\cdot v, w_1\rangle-\langle G'(u)\cdot v, w_2\rangle|\leqslant \varepsilon$, it suffices that $k'|w_1-w_2|_{2,\Omega}\leqslant \varepsilon$, namely,\\
$|w_1-w_2|_{2,\Omega}\leqslant\dfrac{\varepsilon}{k'}$, and
thus, we take $\alpha=\dfrac{\varepsilon}{k'}> 0$.
We deduce that $G'(u)(B_X)$ is uniformly equicontinuous.
$G'(u)(B_X)$ is bounded and uniformly equicontinuous, according to Ascoli's theorem \cite{Emily},  $G'(u)(B_X)$ is relatively compact. Therefore $G'(u)$ is compact. Finally,
$S$ being continuous and $G'(u)$ compact, then $S\circ G'(u)$ is compact and the proof is complete.
\end{proof}
The third property proves that the application $F'(u) : X\rightarrow X$ is an isomorphism, where $F$ is defined by (\ref{F'}) and $u\in X$.
\blem \label{lcom1} Let $u\in X$.
Assuming that $G$ is defined by (\ref{g}) and  $S$ defined by $(\ref{s})$, then 
the map $F'(u) : X\to X$ is an isomorphism of $X$, where $F$ is defined by $(\ref{F'})$ and $X=H_0^2(\Omega)$.
\elem
\begin{proof} 
By assumption, we have $F(u)=u-S\circ G(u)$ and 
the function $F$ is differentiable as the sum of a differentiable function and the composite of two
differentiable functions. Since $S\in \mathcal{L}(Y,X)$, we have $F'(u)=I-S'[G(u)]\circ G'(u)$, that is 
$F'(u)=I-S\circ G'(u)$, which implies $F'(u)\cdot v=v-S\circ G'(u)\cdot v$.
According to Lemma \ref{lcom}\  $S\circ G'(u)$ is a compact operator, and 
from the Fredholm Alternative \cite{Brezis}, $F'(u)$ is an isomorphism on $V$ if the equation  $[I-S\circ G'(u)](w)=0$ admits a unique solution $w=0$. The condition $[I-S\circ G'(u)](w)=0$ equivalent to 
$[S\circ G'(u)] (w)=w$.
By definition of $S$,
$S[G'(u)\cdot w]=w$ implies that $w$ is the solution to the problem:
\begin{equation}
\begin{cases}
\Delta^2 w= G'(u)\cdot w\text{ in } \Omega\\
w=0=\dfrac{\partial w}{\partial n} \text{ on } \Gamma.
\end{cases}
\end{equation}
 It is well known that $G'(u)\cdot w=-\lambda(2p+1)|u|^{2p}w$ and so, we obtain auxiliary system,
\begin{equation}
\begin{cases}
\Delta^2 w=-\lambda(2p+1)|u|^{2p}w \text{ in } \Omega\\
w=\dfrac{\partial w}{\partial n}=0\text{ on } \Gamma,
\end{cases}
\end{equation}
which implies,
\begin{equation}
\label{po}
\begin{cases}
\Delta^2 w+\lambda(2p+1)|u|^{2p}w=0 \text{ in } \Omega\\
w=\dfrac{\partial w}{\partial n}=0 \text{ on } \Gamma,
\end{cases}
\text{ with } \lambda>0.
\end{equation}
According to Proposition \ref{pro1}, we deduce that 
$[I-S\circ G'(u)](w)=0$ is equivalent to $(\ref{po})$.
Consequently,  $[I-S\circ G'(u)](w)=0$ admits a unique solution $w=0$ if and only if $(\ref{po})$ admits a unique solution $w=0$. Let us then show that the problem $(\ref{po})$ admits
a unique solution $w=0$. The problem $(\ref{po})$ is equivalent to $a(v,w)=0, \forall v\in H_0^2(\Omega):=X$, 
with 
\begin{equation}
a(v,w):=\displaystyle\int_{\Omega}\Delta w\Delta vdx+\lambda(2p+1)|u|^{2p}\int_{\Omega}wvdx, \forall (v,w)\in X^2.
\end{equation}
 Since $a\in \mathcal{L}(X\times X,\mathbb{R})$ and $a(w,w)\geq
\left[\min(1,\lambda(2p+1)\|u\|_{L^{2p}(\Omega)}^{2p})\right]|w|_{2,\Omega}^2, \forall w\in X
 $,
 we deduce, according to Lax-Milgram lemma, $(\ref{po})$ admits a unique solution \\$w\in X$.
 Since $w=0$ is a solution  of PDE $(\ref{po})_1$ and moreover $w=\dfrac{\partial w}{\partial n}=0 \text{ on } \Gamma$ then $(\ref{po})$ admits a unique solution $w=0$. This completes the proof.
\end{proof}
Finally, the last result is given by the Lemma \ref{lla} below:
\blem \label{lla} Let $S\in \mathcal{L}(V',V)$ define by (\ref{s}) and 
 $S_h\in \mathcal{L}(V',V_h)$ such that for all $f\in V',\ S_h(f)=w_h$  where $w_h$ the solution of the discrete
problem: find $w_h\in V_h \text{ such that }$
$$\displaystyle\int_{\Omega}\Delta w_h\Delta v_hdx=\langle f,v_h\rangle \ \forall v_h\in V_h.$$ Then,
$\lim\limits_{h\to 0}\|S_h-S\|_{\mathcal{L}(V',V)}=0$, where $V=H_0^2(\Omega)$ and $V'=H^{-2}(\Omega)$.
\elem
\begin{proof}
Let's remember that $$\|S_h-S\|_{\mathcal{L}(V',V)}=\sup_{f\in V'^*}\dfrac{\|S_h(f)-S(f)\|_V}{\|f\|_{V'}}.$$
We have, $\|S_h(f)-S(f)\|_V=\|w_h-w\|_V$ and 
as $w_h$ is the approximate solution of $w$, we deduce
$\lim\limits_{h\to 0}\|w_h-w\|_{V}=0$, which implies
$\lim\limits_{h\to 0}\|S_h(f)-S(f)\|_{V}=0.$
Hence,
$$\lim\limits_{h\to 0}\|S_h-S\|_{\mathcal{L}(V',V)}=0.$$
\end{proof}

From all of the above, we draw the following consequence which is nothing other than the conclusion of Theorem \ref{th}.
\bcoro Let $u$ be the solution of problem $(\ref{p})$ and $\Pi_h\in \mathcal{L}(V,V_h)$ a linear operator
verifying  $\displaystyle\lim\limits_{h\to 0}|v-\Pi_hv|_{2,\Omega}=0$.
There then exists an original neighborhood $\mathcal{O}$ in $V$ and a real number $h_0>0$ such that for
all $h\leqslant h_0$, the discrete problem $(\ref{pd})$ admits a unique solution $u_h$ with $u_h-u\in \mathcal{O}$. Additionally, we have the following a priori error estimate : 
\begin{equation}
\label{}
\|u_h-u\|_V\leqslant M\left(\|u-\Pi_h u\|_V + \|(S_h-S)\circ G(u)\|_V\right).
\end{equation}
with $M$ a constant independent of $h$.
\ecoro
\subsection{BELL and HSIEH-CLOUGH-TOCHER finite elements}\label{Bell}
The conforming approximation of fourth order problems needs finite elements of class $\mathcal{C}^1$. Focusing our attention to triangular finite elements and, in particular, to those which use polynomial spaces, we use in this work two families \cite{BH}: BELL triangles and HSIEH-CLOUGH-TOCHER triangles.\\
Let us consider $(\mathcal{T}_h )_{h>0}$ a family of conforming isotropic triangulation of $\bar\Omega$, where $\Omega$ an open bounded polygonal boundary of $\R^2$. Namely, we set 
$\overline{\Omega}=\displaystyle\bigcup_{K\in\cT_h}K$ where $K$ is triangle, and 
$h_K/\rho_K\leq \sigma_0$ for all element $K$, for all $h$. The quantities $h_K$ and $\rho_K$ are diameter of $K$ and diameter of the biggest ball contained in $K$ respectively (see Figs. \ref{isotropic}, \ref{adm1} and \ref{adm2} for illustration).
\begin{figure}[http!]
	\begin{minipage}[c]{.30\textwidth}
		\centering
		\begin{center}
			\begin{tikzpicture}[scale=0.5]
			\draw (0,1)--(7,1);
			\draw (0,1)--(2,4.5);
			\draw (7,1)--(2,4.5);
			\draw [line width=0.75pt] [>=latex,<->](0,0.75)--(7,0.75)node [midway,below,sloped] {$\mbox{diam} (K)=h_K$};
			\draw  (2.45,2.43) circle (1.4);
			\draw (2.45,2.) node [above]{$\bullet$};
			\draw (3,1.5) node [above]{\small{$\rho_K$}};
			\draw  [line width=0.75pt](2.45,2.43)--(2.5,1);
			\end{tikzpicture}
		\end{center}
		\caption{\footnotesize{\small\small{Isotropic element $K$ in $\mathbb{R}^2$.}}}
		\label{isotropic}
	\end{minipage}
	\hspace*{0.3cm}
	\begin{minipage}[c]{.30\textwidth}
		\centering
		\begin{center}
			\begin{tikzpicture}[scale=0.5]
			\draw (0,3)--(0,-3);
			\draw (0,3)--(1,0);
				\draw (1,0)--(0,-3);
				\draw (0,3)--(-1,0);
					\draw (-1,0)--(0,-3);
			\draw (0,3)--(2,0);
			\draw (0,3)--(-2,0);
			\draw (-2,0)--(2,0);
			\draw (2,0)--(0,-3);
			\draw (-2,0)--(0,-3);
			\draw (0,3)node {$\bullet$};
			\draw (0,-3)node {$\bullet$};
			\draw (2,0)node {$\bullet$};
			\draw (-2,0)node {$\bullet$};
			\draw (1,0)node {$\bullet$};
			\draw (-1,0)node {$\bullet$};
			\draw (0,0)node {$\bullet$};
			\end{tikzpicture}
		\end{center}
		\caption{\footnotesize{\small\small Example of conforming mesh in $\mathbb{R}^2$}}
		\label{adm1}
	\end{minipage}
	\hspace*{0.3cm}
	\begin{minipage}[c]{.30\textwidth}
		\centering
		\begin{center}
			\begin{tikzpicture}[scale=0.5]
			\draw (0,3)--(1,0);
			\draw (0,3)--(-1,0);
			\draw (0,3)--(2,0);
			\draw (0,3)--(-2,0);
			\draw (-2,0)--(2,0);
			\draw (2,0)--(0,-3);
			\draw (-2,0)--(0,-3);
			\draw (1,0)node {$\bullet$};
			\draw (-1,0)node {$\bullet$};
			\draw (0,-3)-- (0,0);
			\draw (0,0)node {$\bullet$};
			\end{tikzpicture}
		\end{center}
		\caption{\footnotesize{\small\small Example of nonconforming mesh in $\mathbb{R}^2$ }}
		\label{adm2}
	\end{minipage}
\end{figure}\mbox{ }
 As in the standard theory, a finite element (in Ciarlet sens) is denoted by a triplet $(K,P_K,\Sigma_K)$ where $K$ is a compact domain of $\mathbb{R}^2$ with $\stackrel{o}{K}$ not empty and $\partial K:=\overline{K}\smallsetminus \stackrel{o}{K}$. The set $P_K$ denotes a space of functions, and $\Sigma_K$ is a set of functional of $P_K^{*}$ (space of linear form defined on $P_K$) \cite[Section 6.1]{Nicaise}.
 We 
define the approximation space $V_h$ as follows:
\begin{equation}
\label{vh}
V_h=\{v\in C^1(\bar{\Omega})\text{ such that } v_{|_{K}}\in P_K,\ \forall K\in\mathcal{T}_h\}\cap H_0^2(\Omega).
\end{equation}
\subsubsection{BELL finite elements}
The ARGYRIS triangle is used to complete polynomial of degree five as function space. By suppression of the values of the normal slopes at the three midside nodes, one gets the BELL triangle (see Table \ref{BelT}). The corresponding basis functions of these elements were done by ARGYRIS-FRIED-SCHARPF elements \cite{AR1, AR2} and next slightly corrected in ARGYRIS-SCHARPF \cite{AR3}. The authors in \cite{AR1, AR2, AR3} achieved considerable simplifications by using the so-called eccentricity parameters which permit to take into account the normal derivatives at the midside nodes (explicitly for ARGYRIS triangle and implicitly for BELL triangle) for triangles of any shape.
\vspace{5mm}
\subsubsection{HSIEH-CLOUGH-TOCHER finite elements}
The HCT finite elements complete and reduced use piecewise polynomials of third degree \cite{BH,Clough}. These elements give rise to interpolations of Hermite type and they permit the construction of spaces of approximations functions of $\mathcal{C}^1$ class. The combined employ of barycentric coordinates $\lambda_{i}$ and eccentricity parameters $E_{i}$ enables the finite element to be defined for any triangle not involving the notion of a reference finite element. Their characteristics are that the triangle is subdivided in three subtriangle  using (for exemple) the center of gravity, and on each subtriangle, we use polynomials of degree three so that the resulting function is of class $\mathcal{C}^1$ on the assembled triangle (see Tables \ref{complet} and \ref{Reduced}).  We present in this paper, a set of basis functions for both elements, \emph{complete} or \emph{reduced}, for triangles of any shape and we use the eccentricity parameters to define the normal slope at midside nodes. These parameters are only dependent on the coordinates of the vertices of the triangle. For simplicity, we shall denote these elements, HCT-C triangle for \emph{complete}  element and HCT-R triangle for \emph{reduced} element, respectively. We denote 
$b_i$ the respective midpoints of sides  $a_{i+1}a_{i}$  and  
$E_i=\dfrac{l^2_{i+2}-l^2_{i+1}}{l^2_{i}}; i=1,2,3,$	
where $a_i=(x_i,y_i)$,
$l^2_{i}=(x_{i+2}-x_{i+1})^2+(y_{i+2}-y_{i+1})^2; i=1,2,3 $.
\begin{table}[H]
	\label{T1}
	\begin{tabular}{|p{1cm}|c|l|}
		\hline
		Reference elements & Space $P_{\hat{K}}$; Degrees of freedom&Basic functions\\
		\hline
		\multicolumn{1}{|c|}{\begin{tikzpicture}
			\draw (0,0) circle (0.2);
			\draw (0,0) circle (0.3);
			\draw (3,0) circle (0.2);
			\draw (3,0) circle (0.3);
			\draw (0,3) circle (0.2);
			\draw (0,3) circle (0.3);
			\coordinate (A) at (0,0);
			\coordinate (B) at (3,0);
			\coordinate (C) at (0,3); 
			
			
			\draw[thick] (A) -- (B) -- (C) -- cycle;
			
			
			\filldraw[black] (A) circle (3pt);
			\filldraw[black] (B) circle (3pt);   
			\filldraw[black] (C) circle (3pt);
			\filldraw[black] (1,1.3)  node[below]{$\hat{K}$};
			\filldraw[black] (0,-0.4)  node[below]{$\hat{a_1}$};
			\filldraw[black] (3.4,-0.4)  node[below]{$\hat{a_2}$};   
			\filldraw[black] (0,3.3)  node[above]{$\hat{a_3}$};

			\end{tikzpicture}
		} & $\begin{array}{ccc}
		
		P_{\hat{K}}=\{\hat{p}\in C^1(\hat{K})|\hat{p}_{|\hat{K}}\in \mathbb{P}_5(\hat{K})\}\\\\ \Sigma_{\hat{K}}=\big\{\hat{p}(\hat{a_i}),\dfrac{\partial \hat{p}}{\partial \hat{x}}(\hat{a_i}),\dfrac{\partial \hat{p}}{\partial \hat{y}}(\hat{a_i}),\\\\\dfrac{\partial^2 \hat{p}}{\partial \hat{x}^2}(\hat{a_i}),\dfrac{\partial^2 \hat{p}}{\partial \hat{x} \partial \hat{y}}(\hat{a_i}),\dfrac{\partial^2 \hat{p}}{\partial \hat{y}^2}(\hat{a_i})\big\}\\\\ \dim P_{\hat{K}}=18\end{array}$&\small$\begin{array}{lll}
		\mbox{}\\\\
		\phi_{1,1}=\lambda^2(10\lambda-15\lambda^2+6\lambda^3\\+30\hat{x}\hat{y}(\hat{x}+\hat{y}))\\\\
		\phi_{1,2}=\hat{x}\lambda^2(3-2\lambda-3\hat{x}^2+6\hat{x}\hat{y})\\\\
		\phi_{1,3}=\hat{y}\lambda^2(3-2\lambda-3\hat{y}^2+6\hat{x}\hat{y})\\\\
		\phi_{1,4}=\dfrac{1}{2}\lambda^2\hat{x}^2(1-\hat{x}+2\hat{y})\\\\
		\phi_{1,5}=\hat{x}\hat{y}\lambda^2\\\\
		\phi_{1,6}=\dfrac{1}{2}\lambda^2\hat{y}^2(1-\hat{y}+2\hat{x})\\\\\
		\phi_{2,1}=\hat{x}^2(10\hat{x}-15\hat{x}^2+\hat{x}^3\\+15\hat{y}^2\lambda)\\\\
		\phi_{2,2}=\dfrac{1}{2}\hat{x}^2\hat{y}(6-4\hat{x}-3\hat{y}-3\hat{y}^2\\+3\hat{y}\hat{x})\\\
		\phi_{2,3}=\dfrac{1}{2}\hat{x}^2(-8\hat{x}+14\hat{x}^2-6\hat{x}^3\\-15\hat{y}^2\lambda)\\\\
		\phi_{2,4}=\dfrac{1}{2}\hat{x}^2(2\hat{x}(1-\hat{x})^2+5\hat{y}^2\lambda)\\\\
		\phi_{2,4}=\dfrac{1}{2}\hat{x}^2\hat{y}(-2+2\hat{x}+\hat{y}+\hat{y}^2\\-\hat{y}\hat{x})\\\\
		\phi_{2,6}=\dfrac{1}{4}\hat{x}^2\hat{y}^2\lambda+\dfrac{1}{2}\hat{x}^3\hat{y}^2\\\\
		\phi_{3,1}=\hat{y}^2(10\hat{y}-15\hat{y}^2+6\hat{y}^3\\+15\hat{x}^2\lambda)\\\\
		\phi_{3,2}=\dfrac{1}{2}\hat{x}\hat{y}^2(-6-3\hat{x}-4\hat{y}-3\hat{x}^2\\+3\hat{y}\hat{x})\\\\
		\phi_{3,3}=\dfrac{1}{2}\hat{y}^2(-8\hat{y}+14\hat{y}^2-6\hat{y}^3\\-15\hat{x}^2\lambda)\\\\
		\phi_{3,4}=\dfrac{1}{4}\hat{x}^2\hat{y}^2\lambda+\dfrac{1}{2}\hat{x}^2\hat{y}^3\\\\
		\phi_{3,5}=\dfrac{1}{2}\hat{x}\hat{y}^2(-2+\hat{x}+2\hat{y}\\+\hat{x}^2-\hat{y}\hat{x})\\\\
		\phi_{3,6}=\dfrac{1}{4}\hat{y}^2(2\hat{y}(1-\hat{y})^2+5\hat{x}^2\lambda)
	\end{array}$\\
	\hline
	\end{tabular}
	\vspace*{0.5cm}
	\caption{Finite element of \textbf{Bell} \cite{ AR3} }
	\label{BelT}
	\end{table}
\begin{table}[H]
	\label{T2}	
	\begin{tabular}{|p{2cm}|l|l|}
		\hline
		\small Reference elements & \small Space $P_{K}$; Degrees of freedom&\small Basic functions\\
		\hline
		\multicolumn{1}{|c|}{\small\small\begin{tikzpicture}
			\coordinate (A1) at (0,0);
			\coordinate (A2) at (5,0);
			\coordinate (A3) at (2,4);
			
			\coordinate (B) at (2.0,1.8);
			
			\draw (2.2,0) node[below] {$b_3$};
			\draw (0.9,1.8) node[left] {$b_2$};
			\draw (3.3,2.4) node[right] {$b_1$};
			\draw (2.4,0) node {$|$};
			\draw (1,1.8) node {$-$};
			\draw (3.2,2.2) node {$-$};
			
			\filldraw[black] (2.0,0.4) node[above] {$K_1$};
			\filldraw[black] (1.2,2.4) node[right] {$K_2$};
			\filldraw[black] (2.7,2.5) node[below] {$K_3$};
			
			\coordinate (M12) at (2.6,0);
			\coordinate (M23) at (3.5,2);
			\coordinate (M31) at (1,2);
			
			\draw (M12) rectangle +(0.2,0.2);
			\draw (1.0,2) rectangle +(0.2,0.2);
			\draw (3.4,1.77) rectangle +(0.2,0.2);
			
			\draw[thick] (A1) -- (A2) -- (A3) -- cycle;
			
			\draw[thick] (A1) -- (B);
			\draw[thick] (A2) -- (B);
			\draw[thick] (A3) -- (B);
			
			\draw[dashed,thick] (M12) -- (A3);
			\draw[dashed,thick] (M23) -- (A1);
			\draw[dashed,thick] (M31) -- (A2);
			
			\fill[black] (A1) circle (2pt) node[below] {$a_1$};
			\fill[black] (A2) circle (2pt) node[below] {$a_2$};
			\fill[black] (A3) circle (2pt) node[above] {$a_3$};
			
			\fill[black] (B) circle (2pt) node[below] {$a_0$};
			
			\fill[black!100] (M12) circle (2pt) node[below] {$c_3$};
			\fill[black!100] (M23) circle (2pt) node[right] {$c_1$};
			\fill[black!100] (M31) circle (2pt) node[left] {$c_2$};
			\end{tikzpicture}}
			&\small $\begin{array}{ccc}
		P_K=\{p\in C^1(K);p_{|{K_i}}\in \mathbb{P}_3(K_i),\\ i=1,2,3\}\\\\
		\Sigma_{K}=\{p(a_i),\nabla p(a_i)\cdot(a_{i+1}-a_i),\\\nabla p(a_i)\cdot(a_{i+2}-a_i),\\\nabla p(b_i)\cdot(a_i-c_i), i=1,2,3 \}\\\\
		\dim P_K=12\\
		a_0= \text{ barycenter of }K\\
		\text{ Let } r_i=p_{|{K_i}}
		\end{array}$&\small$\begin{array}{lllll}
		r^0_{i,i}=-\frac{1}{2}(E_{i+1}-E_{i+2})\lambda^3_{i}\\+\frac{3}{2}(3+E_{i+1})\lambda^2_{i}\lambda_{i+2}\\+\frac{3}{2}(3-E_{i+2})\lambda^2_{i}\lambda_{i+1}\\\\
		r^0_{i,i+1}=\frac{1}{2}(1-2E_{i}-E_{i+1})\lambda^3_{i}\\+\lambda^3_{i+1}-\frac{3}{2}(1-E_{i})\lambda^2_{i}\lambda_{i+2}\\+\frac{3}{2}(E_{i}+E_{i+2})\lambda^2_{i}\lambda_{i+1}\\+3\lambda^2_{i+1}\lambda_{i}+3\lambda^2_{i+1}\lambda_{i+2}\\+3(1-E_{i})\lambda_{i}\lambda_{i+1}\lambda_{i+2}\\\\
		r^0_{i,i+2}=\frac{1}{2}(1+2E_{i}+E_{i+1})\lambda^3_{i}\\+\lambda^3_{i+2}-\frac{3}{2}(E_{i}+E_{i+1})\lambda^2_{i}\lambda_{i+2}\\-\frac{3}{2}(1+E_{i})\lambda^2_{i}\lambda_{i+1}\\+3\lambda^2_{i+2}\lambda_{i+1}+3\lambda^2_{i+2}\lambda_{i}\\+3(1+E_{i})\lambda_{i}\lambda_{i+1}\lambda_{i+2}\\\\
		r^1_{i,i,i+2}=-\frac{1}{12}(1+E_{i+1})\lambda^3_{i}\\+\frac{1}{4}(7+E_{i+1})\lambda^2_{i}\lambda_{i+2}\\-\frac{1}{2}\lambda^2_{i}\lambda_{i+1}\\\\
		r^1_{i,i,i+1}=-\frac{1}{12}(1-E_{i+2})\lambda^3_{i}\\-\frac{1}{2}\lambda^2_{i}\lambda_{i+2}+\frac{1}{4}(7-E_{i+2})\lambda^2_{i}\lambda_{i+1}\\\\\\
		r^1_{i,i+1,i}=-\frac{1}{12}(7+E_{i+2})\lambda^3_{i}\\+\frac{1}{2}\lambda^2_{i}\lambda_{i+2}+\frac{1}{4}(5+E_{i+2})\lambda^2_{i}\lambda_{i+1}\\+\lambda^2_{i+1}\lambda_{i}-\lambda_{i}\lambda_{i+1}\lambda_{i+2}\\\\
		r^1_{i,i+1,i+2}=\frac{1}{6}(4-E_{i})\lambda^3_{i}\\-\frac{1}{4}(3-E_{i})\lambda^2_{i}\lambda_{i+1}\\-\frac{1}{4}(5-E_{i})\lambda^2_{i+1}\lambda_{i}+\lambda^2_{i+2}\lambda_{i+1}\\+\frac{1}{2}(3-E_{i})\lambda_{i}\lambda_{i+1}\lambda_{i+2}\\\\
		r^1_{i,i+2,i+1}=\frac{1}{6}(4+E_{i})\lambda^3_{i}\\-\frac{1}{4}(5+E_{i})\lambda^2_{i}\lambda_{i+2}\\-\frac{1}{4}(3+E_{i})\lambda^2_{i}\lambda_{i+1}+\lambda^2_{i+2}\lambda_{i+1}\\+\frac{1}{2}(3+E_{i})\lambda_{i}\lambda_{i+1}\lambda_{i+2}\\\\
		r^1_{i,i+2,i}=-\frac{1}{12}(7-E_{i+1})\lambda^3_{i}\\+\frac{1}{4}(5-E_{i+1})\lambda^2_{i}\lambda_{i+2}+\frac{1}{2}\lambda^2_{i}\lambda_{i+1}\\+\lambda^2_{i+2}\lambda_{i}-\lambda_{i}\lambda_{i+1}\lambda_{i+2}\\\\
		r^1_{\bot,i,i}=\frac{4}{3}\lambda^3_{i}-2\lambda^2_{i}\lambda_{i+2}-2\lambda^2_{i}\lambda_{i+1}\\+4\lambda_{i}\lambda_{i+1}\lambda_{i+2}\\\\
		r^1_{\bot,i,i+1}=-\frac{2}{3}\lambda^3_{i}+2\lambda^2_{i}\lambda_{i+2}\\\\
		r^1_{\bot,i,i+2}=-\frac{2}{3}\lambda^3_{i}+2\lambda^2_{i}\lambda_{i+1}\\\\
		\end{array}$\\
		\hline	 
	\end{tabular}
\vspace*{0.5cm}
	\caption{Finite element of \textbf{Clough-Tocher Complete} \cite{BH} }
	\label{complet}
\end{table}
\newpage
\begin{table}[H]
	\label{T3}	
	\begin{tabular}{|p{2cm}|c|r|}
		\hline
		Reference elements & Space $P_{K}$; Degrees of freedom&Basic functions\\
		\hline
		\multicolumn{1}{|c|}{\begin{tikzpicture}

			\coordinate (A) at (0,0);
			\coordinate (B) at (4,0);
			\coordinate (C) at (2,3.464);
			
			\coordinate (G) at (2.2, 1);

			\draw[thick] (A) -- (B) -- (C) -- cycle;
			
			\draw[thick] (A) -- (G) -- (B);
			\draw[thick] (B) -- (G) -- (C);
			\draw[thick] (C) -- (G) -- (A);

			\filldraw[black] (A) circle (2pt) node[below]{$a_1$};
			\filldraw[black] (B) circle (2pt) node[below]{$a_2$};   
			\filldraw[black] (C) circle (2pt) node[above]{$a_3$};
			\filldraw[black] (G) circle (2pt) node[below]{$a_0$};	
			\end{tikzpicture}} &\small $\begin{array}{ccc} P_K=\{p\in C^1(K);\\r_i=p_{|{K_i}}\in \mathbb{P}_3(K_i),\\ i=1,2,3 \text{ et }\dfrac{\partial p}{\partial n_i}\in P_1(K'_i),\\ \forall K'\subset K\}\\\\ \Sigma_{K}=\{p(a_i),\nabla p(a_i)\cdot(a_{i+1}-a_i),\\\nabla p(a_i)\cdot(a_{i+2}-a_i),\\ i=1,2,3 \}\\\\
		\dim P_K=9\\
		a_0= \text{ barycenter of }K
		\end{array}$&\small$\begin{array}{lllll}   
		\check{r}^0_{i,i}=-\frac{1}{2}(E_{i+1}-E_{i+2})\lambda^3_{i}\\+\frac{3}{2}(3+E_{i+1})\lambda^2_{i}\lambda_{i+2}\\+\frac{3}{2}(3-E_{i+2})\lambda^2_{i}\lambda_{i+1}\\\\
		\check{r}^0_{i,i+1}=\frac{1}{2}(1-2E_{i}-E_{i+1})\lambda^3_{i}\\+\lambda^3_{i+1}-\frac{3}{2}(1-E_{i})\lambda^2_{i}\lambda_{i+2}\\+\frac{3}{2}(E_{i}+E_{i+2})\lambda^2_{i}\lambda_{i+1}\\+3\lambda^2_{i+1}\lambda_{i}+3\lambda^2_{i+1}\lambda_{i+2}\\+3(1-E_{i})\lambda_{i}\lambda_{i+1}\lambda_{i+2}\\\\
		\check{r}^0_{i,i+2}=\frac{1}{2}(1+2E_{i}+E_{i+1})\lambda^3_{i}\\+\lambda^3_{i+2}-\frac{3}{2}(E_{i}+E_{i+1})\lambda^2_{i}\lambda_{i+2}\\-\frac{3}{2}(1+E_{i})\lambda^2_{i}\lambda_{i+1}\\+3\lambda^2_{i+2}\lambda_{i+1}+3\lambda^2_{i+2}\lambda_{i}\\+3(1+E_{i})\lambda_{i}\lambda_{i+1}\lambda_{i+2}\\\\
		\check{r}^1_{i,i,i+2}=-\frac{1}{4}(1+E_{i+1})\lambda^3_{i}\\+\frac{1}{4}(5+3E_{i+1})\lambda^2_{i}\lambda_{i+2}\\+\frac{1}{2}\lambda^2_{i}\lambda_{i+1}\\\\
		\check{r}^1_{i,i,i+1}=-\frac{1}{4}(1-E_{i+2})\lambda^3_{i}\\+\frac{1}{2}\lambda^2_{i}\lambda_{i+2}+\frac{1}{4}(5-3E_{i+2})\lambda^2_{i}\lambda_{i+1}\\\\
		\check{r}^1_{i,i+1,i}=\frac{1}{4}(1-E_{i+2})\lambda^3_{i}\\-\frac{1}{2}\lambda^2_{i}\lambda_{i+2}-\frac{1}{4}(1-3E_{i+2})\lambda^2_{i}\lambda_{i+1}\\+\lambda^2_{i+1}\lambda_{i}+\lambda_{i}\lambda_{i+1}\lambda_{i+2}\\\\
		\check{r}^1_{i,i+1,i+2}=-\frac{1}{2}E_{i}\lambda^3_{i}\\-\frac{1}{4}(1-3E_{i})\lambda^2_{i}\lambda_{i+2}\\+\frac{1}{4}(1+3E_{i})\lambda^2_{i}\lambda_{i+1}+\lambda^2_{i+1}\lambda_{i+2}\\+\frac{1}{2}(1-3E_{i})\lambda_{i}\lambda_{i+1}\lambda_{i+2}\\\\
		\check{r}^1_{i,i+2,i+1}=\frac{1}{2}E_{i}\lambda^3_{i}\\+\frac{1}{4}(1-3E_{i})\lambda^2_{i}\lambda_{i+2}\\-\frac{1}{4}(1+3E_{i})\lambda^2_{i}\lambda_{i+1}+\lambda^2_{i+2}\lambda_{i+1}\\+\frac{1}{2}(1+3E_{i})\lambda_{i}\lambda_{i+1}\lambda_{i+2}\\\\
		\check{r}^1_{i,i+2,i}=\frac{1}{4}(1+E_{i+1})\lambda^3_{i}\\-\frac{1}{4}(1+3E_{i+1})\lambda^2_{i}\lambda_{i+2}-\frac{1}{2}\lambda^2_{i}\lambda_{i+1}\\+\lambda^2_{i+2}\lambda_{i}+\lambda_{i}\lambda_{i+1}\lambda_{i+2}\\\\
		\end{array}$\\
		\hline	 
	\end{tabular}
\vspace*{0.5cm}
	\caption{\textbf{Finite element of Tocher Reduced} \cite{Clough} }
	\label{Reduced}
\end{table}

\subsection{Iterative problem}\label{algorithme}
 We recall that $G$ is defined in (\ref{g}) and $S$  define by (\ref{s}). We consider $F$, application define by (\ref{F'}).
Our continuous problem being nonlinear, we use an algorithm to solve the approximate problem. In the previous sub-section \ref{technique}, we showed that the problem $(\ref{p})$ is equivalent to $(\ref{F})$ (cf. Poposition \ref{pro1}) and that the approximate problem $(\ref{Fh})$ admits a unique solution thanks to the Brezzi-Rappaz-Raviart theorem (Theorem \ref{th}). Thus the equation $u_h-S_h\circ G(u_h)=0$ admits a
unique solution in a neighborhood $\mathcal{O}$ of the origin in $V$.
\begin{equation}\label{fi}
F(u_h)=0\mbox{ equivalent to } S_h\circ G(u_h)=u_h.\end{equation}
Setting $T(u_h)=S_h\circ G(u_h)$, 
we thus have $F(u_h)=0$ equivalent to  $T(u_h)=u_h$.
We deduce that $u_h$ is the unique fixed point of $T$.
From $T(u_h)=u_h$, we define the recurring sequence for a $u^0_h$ fixed and for all $n\geqslant0$, $u^{n+1}_h=T(u^n_h)$ with $T=S_h\circ G$ and $(u^n_h)_{n\geqslant 0}$ a sequence of solutions of $(\ref{fi})$. Thus, we use the fixed point algorithm to calculate $u_h$.
\subsection{Fixed point algorithm}\label{pointfixe}
Let $u^{0}_h$ initially known. For $n\in \N$, the fixed point algorithm  is presented as follows: given $u_h^n\in V_h$, find  $u^{n+1}_h\in V_h \text{ such that }$
\begin{equation}\label{al}
\int_{\Omega}\Delta u^{n+1}_h\Delta v_hdx+\lambda\int_{\Omega}|u^{n}_h|^{2p}u^{n+1}_h v_hdx=\langle f,v_h\rangle \ \forall v_h\in V_h.
\end{equation}
Finally, the Lax-Milgram lemma applied to the problem (\ref{al}) ensure unicity of the solution $u_h^{n+1}$ in $V_h$. 
In addition we have the following estimate : 
\begin{equation}
|u^{n+1}_h|_{2,\Omega} \leqslant\|f\|_{-2,\Omega}, \forall n\in\N.
\end{equation}
After showing the existence and uniqueness of the sequence
of functions $(u_h^n)_{n\in \N}$, we show
that it converges to $u_h$ by establishing the estimate,
\begin{equation}
|u^{n+1}_h-u_h|_{2,\Omega}\leqslant C|u^{n}_h-u_h|_{2,\Omega}.
\end{equation}
\bthm\label{thm1}
 Let $u^{n+1}_h$ the solution of the iterative problem $(\ref{al})$ and $u_h$ the solution of the discrete problem $(\ref{pd})$. Let $p$ be a positive real number. If $(1-\lambda A)>0$,  the problem $(\ref{al})$ verifies the following estimate:
$$|u^{n+1}_h-u_h|_{2,\Omega}\leqslant \lambda(1-\lambda A)^{-1}B|u^{n}_h-u_h|_{2,\Omega},$$
with $A=mes(\Omega)c^{2p}_1c_2\|f\|^{2p}_{-2,\Omega}$ and $B= mes(\Omega)4pC^{2p}_1c'_2c_3\|f\|^{2p}_{-2,\Omega}$.
\ethm
\begin{proof} (Theorem \ref{thm1})
From the problem $(\ref{al})$ we have
$$\int_{\Omega}\Delta u^{n+1}_h\Delta v_hdx+\lambda\int_{\Omega}|u^{n}_h|^{2p}u^{n+1}_hv_hdx =\langle f,v_h\rangle \ \forall v_h\in V_h,$$
and 
from the problem $(\ref{pd})$ we have
$$\int_{\Omega}\Delta u_h\Delta v_hdx+\lambda\int_{\Omega}|u_h|^{2p}u_hv_hdx =\langle f,v_h\rangle \ \forall v_h\in V_h.$$
By differentiating between the equality of problems $(\ref{al})$ and $(\ref{pd})$ we obtain
$$\int_{\Omega}\Delta( u^{n+1}_h-u_h)\Delta v_hdx=-\lambda\int_{\Omega}(|u^{n}_h|^{2p}u^{n+1}_h-|u_h|^{2p}u_h)v_hdx.$$
By adding and subtracting $|u^n_h|^{2p}u_h$ we have
\begin{eqnarray}\label{E}\nonumber
\int_{\Omega}\Delta( u^{n+1}_h-u_h)\Delta v_hdx&=&-\lambda\int_{\Omega}(|u^{n}_h|^{2p}u^{n+1}_h-|u^n_h|^{2p}u_h)v_hdx\\\nonumber
&&-\lambda\int_{\Omega}(|u^n_h|^{2p}u_h-|u_h|^{2p}u_h)v_hdx\\
&=&\lambda\int_{\Omega}|u^n_h|^{2p}(u_h-u^{n+1}_h)v_hdx\\\nonumber
&&+\lambda\int_{\Omega}(|u_h|^{2p}-|u^n_h|^{2p})u_hv_hdx.	
\end{eqnarray}
Let's mark each term. Using the fact that $H^2(\Omega)$ injects into $C^0(\bar{\Omega})$ we have:
$|u^n_h|\leqslant c_1|u^n_h|_{2,\Omega} \ \forall u\in H^{2}_{0}(\Omega),$ and moreover $|u^{n+1}_h|_{2,\Omega} \leqslant\|f\|_{-2,\Omega}\ \forall n\in\N$, which implies the estimation:
\begin{eqnarray}\label{est1}
\lambda\int_{\Omega}|u^n_h|^{2p}(u_h-u^{n+1}_h)v_hdx \leq A|v_h|_{1,\Omega}|u^{n+1}_h-u_h|_{1,\Omega}, 
\end{eqnarray}
where
$A=\mbox{mes}(\Omega)c^{2p}_1c_2c_3\|f\|^{2p}_{-2,\Omega}> 0.$
For estimate the term
$\lambda\int_{\Omega}(|u_h|^{2p}-|u^n_h|^{2p})u_hv_hdx$, we 
use the fact that  $||u_h|^{2p}-|u^n_h|^{2p}|\leqslant 2p|u_h-u^n_h|(|u_h|^{2p-1}+|u^n_h|^{2p-1})$, and we obtain the estimate:
\begin{eqnarray}\label{est2}
	\lambda\int_{\Omega}(|u_h|^{2p}-|u^n_h|^{2p})u_hv_hdx \leq \lambda B |v_h|_{2,\Omega}|u_h-u^n_h|_{2,\Omega}, 
\end{eqnarray}
where $B=4p\mbox{mes}(\Omega)c^{2p}_1c'_2c_3\|f\|^{2p}_{-2,\Omega}.$
Combining $(\ref{est1})$ and $(\ref{est2})$, and 
replacing in the inequality $(\ref{E})$ $v_h$ by $u^{n+1}_h-u_h$ we obtain:
\begin{eqnarray}\label{E1}\nonumber
\int_{\Omega}\Delta^2( u^{n+1}_h-u_h)dx&\leqslant& \lambda A|u^{n+1}_h-u_h|^2_{2,\Omega}+\lambda B|u^{n+1}_h-u_h|_{2,\Omega}|u_h-u^n_h|_{2,\Omega},
\end{eqnarray}
Using the equivalence between the norms $\|\Delta u\|_{L^2(\mathcal{O})}$ and $|\cdot|_{2,\Omega}$, we obtain the estimate: 
\begin{eqnarray}
|u^{n+1}_h-u_h|^2_{2,\Omega}&\leqslant& \lambda A|u^{n+1}_h-u_h|^2_{2,\Omega}+\lambda B|u^{n+1}_h-u_h|_{2,\Omega}|u_h-u^n_h|_{2,\Omega},
\end{eqnarray}
which implies, $
(1-\lambda A)|u^{n+1}_h-u_h|^2_{2,\Omega}\leqslant\lambda B|u^{n+1}_h-u_h|_{2,\Omega}|u_h-u^n_h|_{2,\Omega}.
$
Simplifying by  $|u^{n+1}_h-u_h|_{2,\Omega}$  we have 
$(1-\lambda A)|u^{n+1}_h-u_h|_{1,\Omega}\leqslant \lambda B|u^{n}_h-u_h|_{2,\Omega}.$
If $(1-\lambda A)>0$ we deduce
$|u^{n+1}_h-u_h|_{2,\Omega}\leqslant \lambda(1-\lambda A)^{-1}B|u^{n}_h-u_h|_{2,\Omega}$.
\end{proof}
\begin{rmq} (Algorithm convergence)
	From Theorem \ref{thm1}
 and by recurrence, 
we obtain estimate:
\begin{eqnarray}
|u^{n}_h-u_h|_{2,\Omega}\leqslant \left[\lambda(1-\lambda A)^{-1}B\right]^n|u^{0}_h-u_h|_{2,\Omega}, \forall n\in \mathbb{N}.
\end{eqnarray}
Consequently, we have the convergence of the sequence $(u^n_h)_{n\in \N}$ in the norm $|\cdot|_{2,\Omega}$ if 
\begin{equation}\label{cond}
0< \lambda(1-\lambda A)^{-1}B< 1.
\end{equation}
 The condition (\ref{cond}) is often satisfied in pratice by making a appropriate choice of $\Omega$, $p$ and $\lambda$ \cite{Jad}.
\end{rmq}
\section{Errors analysis}\label{analyse}
\subsection{A priori error analysis}\label{apriori}
For the convergence of the employed numerical method, we define the  interpolation operator and its approximation properties.\\
Let $(K, P_K, \Sigma_{K})$ be a finite element. For all $v\in C^1$, $P_K$ interpolated by $v$ is the unique element $\Pi_hv \in P_K$ such that
\begin{eqnarray}
\hspace*{3.5cm} \varphi_i(\Pi_hv)=\varphi_i(v), \forall i=1,2,\cdots,N \text{ where } N=\dim  P_K
\end{eqnarray}
\begin{itemize}
\item In the case of Bell \cite{MM}: Let $\hat{\Pi_h}^{\small{Bell}}$ be the interpolation operator associated with the Bell triangle. We have:
	\begin{equation}
	\begin{array}{llll}
	\displaystyle\hat{\Pi_h}^{\small{Bell}}\hat{v}=\sum_{j=1}^{6}\sum_{i=1}^{3}\Big[\hat{v}(\hat{a_i})\phi_{i,j}+\dfrac{\partial \hat{v}}{\partial \hat{x}}(\hat{a_i})\phi_{i,j}+\dfrac{\partial \hat{v}}{\partial \hat{y}}(\hat{a_i})\phi_{i,j}\\\\+\dfrac{\partial^2 \hat{v}}{\partial \hat{x}^2}(\hat{a_i})\phi_{i,j}+\dfrac{\partial^2 \hat{v}}{\partial \hat{x} \partial \hat{y}}(\hat{a_i})\phi_{i,j}+\dfrac{\partial^2 \hat{v}}{\partial \hat{y}^2}(\hat{a_i})\phi_{i,j}\Big].
	\end{array}
	\end{equation}
	\item In the case of HCT-C \cite{PG-Clough,PG-CloughI}: Let $\Pi^{\small{HCT-C}}_h$ be the interpolation operator associated with the complete Hsieh-Clough-Tocher triangle and $\Pi^{\small{HCT-C}}_{h_i}$ the restriction of $\Pi^{\small{HCT-C}}_h$ to the triangle $K_i$. We have:
	\begin{equation}
	\label{}
	\begin{array}{llll}
	\displaystyle\Pi^{\small{HCT-C}}_{h_i}v=\sum_{j=1}^{3}\big[v(a_j)r^0_{i,j}+\nabla v(a_j)\cdot (a_{j+1}-a_j)r^1_{i,j,j+1}\\\\+\nabla v(a_j)\cdot (a_{j+2}-a_j)r^1_{i,j,j+2}+\nabla v(b_j)\cdot (a_j-c_j)r^1_{\bot,i,j}\big].
	\end{array}
	\end{equation}
	\item In the case of HCT-R \cite{PG-Clough,PG-CloughI}: Let $\check{\Pi}^{\small{HCT-R}}_h$ be the interpolation operator associated with the complete Hsieh-Clough-Tocher triangle and $\check{\Pi}^{\small{HCT-R}}_{h_i}$ the restriction of $\check{\Pi}^{\small{HCT-R}}_h$ to the triangle $K_i$. We have:
	\begin{equation}
	\label{}
	\check{\Pi}^{\small{HCT-R}}_{h_i}v=\sum_{j=1}^{3}\left[v(a_j)\check{r}^0_{i,j}+\nabla v(a_j)\cdot (a_{j+1}-a_j)\check{r}^1_{i,j,j+1}+\nabla v(a_j)\cdot (a_{j+2}-a_j)\check{r}^1_{i,j,j+2}\right].
	\end{equation}
\end{itemize}
\blem(\textbf{Approximation Properties})
	 In the case of Bell \cite{MM}, we have the property,  
	\begin{equation}
	\label{}
	\|v-\Pi^{\small{Bell}}_hv\|_{m,K}\leqslant Ch_K^{5-m}\|v\|_{5,K}, \text{ pour } m=0,1,2,3 \text{ et } v\in H^5(K).
	\end{equation}
	 While in the case of HCT-C \cite{PG-Clough,PG-CloughI} the estimate
	\begin{equation}
	\label{EHCTC}
	\|v-\Pi^{\small{HCT-C}}_hv\|_{m,K}\leqslant Ch_K^{4-m}\|v\|_{4,K}, \text{ pour } m=0,1,2 \text{ et } v\in H^4(K).
	\end{equation} is satisfy.
	 In the case of HCT-R \cite{PG-Clough,PG-CloughI}, we obtain, 
	\begin{equation}
	\label{EHCTR}
	\|v-\Pi^{\small{HCT-R}}_hv\|_{m,K}\leqslant Ch_K^{4-m}\|v\|_{4,K}, \text{ pour } m=0,1,2, \text{ et } v\in H^4(K).
	\end{equation}
\elem
\blem\cite{Bendali}\label{Ben} Let $u\in H^4(\Omega)$ unique solution of $(\ref{p})$. Let $G$ define by $(\ref{g})$ and $S$ define by $(\ref{s})$. Then, we have the following estimates : 
\begin{equation}
\label{priori1}
\|u-\Pi_hu\|_V\leqslant C_1h^2,
\end{equation}
\begin{equation}
\label{priori2}
\|(S_h-S)\circ G(u)\|_V\leqslant C_2h^2,
\end{equation}
where $C_1$ and $C_2$ are strictly positive real constants.
\elem
\begin{proof}	The finite elements used for the discretization are HCT and the family of triangulations considered is regular. Using the estimates \ref{EHCTC} and \ref{EHCTR}  we have the estimate  (\ref{priori1}) for $m=2$.
 Moreover, on the estimate $\|(S_h-S)\circ G(u)\|_V\leqslant C_2h^2$ as the discretization error estimate of the biharmonic operator.
\end{proof}
\begin{rmq}
	To show the convergence of the method, we use the following a priori error estimate: $
	\|u_h-u\|_V\leqslant M\big(\|u-u_h\|_V + \|(S_h-S)\circ G(u)\|_V\big),$
	and the lemma \ref{Ben} above for obtain $
	\|u_h-u\|_V\leqslant M(C_1h^2 + C_2h^2)$. Hence $
	\|u_h-u\|_V\leqslant \xi h^2 \text{ with } \xi=M(C_1+C_2)> 0$.
\end{rmq}
\begin{rmq}For Bell elements, assuming the exact solution has regularity $H^5(\Omega)$, we obtain a third-order convergence for $m=2$ and to show the convergence of the method, we use the following a priori error estimate: $\|u_h-u\|_V\leqslant M\big(\|u-u_h\|_V + \|(S_h-S)\circ G(u)\|_V\big),$
	and the lemma \ref{Ben} above for obtain $
	\|u_h-u\|_V\leqslant M(C_1h^3 + C_2h^3)$. Hence $
	\|u_h-u\|_V\leqslant \xi h^3 \text{ with } \xi=M(C_1+C_2)> 0$.
\end{rmq}
\subsection{A posteriori error analysis}\label{aposteriori}
In this section, we use the weighted residual method to determine the a posteriori error indicators. We show that the family of a posteriori error indicators obtained is
both reliable and efficient.
We start by defining the a posteriori error indicators and then we do an a posteriori error analysis
based on these indicators.
\bdfn(\textbf{Error indicators}) \label{indicator}
Let  $u^{n+1}_h$ be the unique solution of the iterative
problem $(\ref{al})$.
Then, the a posteriori error indicators are locally defined by: 
\begin{equation}
\eta_{K,n}:=\left[(\eta^{(D)}_{K,n})^2+(\eta^{(L)}_{K,n})^2\right]^{1/2},\ \forall K\in \mathcal{T}_h,
\end{equation}
where 
\begin{eqnarray}\nonumber
\eta^{(D)}_{K,n}&:=& \displaystyle h_{K}^2\left\|f_h-\Delta^2 u^{n+1}_h-\lambda|u^n_h|^{2p}u^n_h\right\|_{L^2(K)}+\displaystyle\sum_{e\in \varepsilon(K)}\left\|\Delta u^{n+1}_h\right\|_{L^2(e)}\\
&+&\displaystyle\frac{1}{2}\sum_{e\in \varepsilon(K)}h^{3/2}_{e}\left\|\left[\frac{\partial(\Delta u^{n+1}_h)}{\partial n}\right]_e\right\|_{L^2(e)}
\end{eqnarray}
and 
\begin{eqnarray}
	\eta^{(L)}_{K,n}:=\lambda h_K^2\left|u^n_h-u^{n+1}_h\right|_{2,K}.
\end{eqnarray}
$\eta^{(D)}_{K,n}$ denotes the discretization error indicator while  $\eta^{(L)}_{K,n}$  is due to linearization.
\edfn
\subsubsection{Reliability of indicators}\label{upper}
	In order to perform a posteriori error analysis based on these indicators, we introduce some notations. 
We note by  $\varepsilon(K)$ the set of
sides of a triangle $K$ of $\mathcal{T}_h$ which are not contained in $\partial\Omega$ and 
 $h_K$  denotes the diameter of an element $K$ of $\mathcal{T}_h$ while $h_e$ is the length an element $e$ of $\varepsilon(K)$.  Approximation of the
data $f$ in $Z_h$ where 
$Z_h=\{g_h\in L^2(\Omega);\forall K\in \mathcal{T}_h,{g_h}_{|_K}\in \mathbb{P}_l(K)\}, \mbox{ } l\in\mathbb{N}$ is denoted by $f_h$ and 
 $\left[\frac{\partial \Delta u_h}{\partial n}\right]_e$ the jump of $\frac{\partial \Delta u_h}{\partial n}$ through the interior edges. We set:
 \begin{equation}\label{form}
 a(u,v):=\int_{\Omega}\Delta u (x)\Delta v (x)dx+\lambda\int_{\Omega}|u^{n}_h (x)|^{2p} v (x)dx.
 \end{equation}
In order to perform an a posteriori error increase based on these indicators, we determine the residue of $u^{n+1}_h$ following $v$ where $v\in V$. Finally, the residue of $u_h$ in $v$ for conforming approximation of formulation $(\ref{form})$, denoted $R^h(u_h,v)$ is defined by
$R^h(u_h,v)=a(u-u_h,v).$ If $u^{n+1}_h$ is the unique solution of the iteratif problem $(\ref{al})$ and let $v\in V:=H_0^2(\Omega)$,
then we can obtained the residu equation:
\begin{eqnarray}
	R^h(u^{n+1}_h,v):=\displaystyle\sum_{K\in \mathcal{T}_h} \left[R_{1,K}+R_{2,K}+R_{3,K}+R_{4,K}+R_{5,K}\right],
\end{eqnarray}
where,
\begin{eqnarray*}
	R_{1,K}&:=& \int_{K}(f-f_h)(v-v_h)dx\\
	R_{2,K}&:=& \displaystyle\int_{K}(f_h-\Delta^2 u^{n+1}_h-\lambda|u^n_h|^{2p}u^n_h)(v-v_h)\\
	R_{3,K}&:=& -\displaystyle\sum_{e\in \varepsilon(K)}\int_{e}\Delta u^{n+1}_h\frac{\partial (v-v_h)}{\partial n}\\
	R_{4,K}&:=& \displaystyle\frac{1}{2}\sum_{e\in \varepsilon(K)}\int_{e}\left[\frac{\partial\Delta u^{n+1}_h}{\partial n}\right](v-v_h)\\
	R_{5,K}&:=&\lambda \int_{K}|u^{n}_h|^{2p}(u^{n}_h-u^{n+1}_h)(v-v_h).
\end{eqnarray*}
In order to increase the overall error $|u-u_h|_{2,\Omega}$ by the a posteriori error indicators define in Definition \ref{indicator}, we need a regularization operator called the Clément interpolation operator.
\bdfn(\textbf{Clément interpolation operator})
Clément's interpolation operator is defined as follows:

\begin{eqnarray}\label{Cl}
C^h : V\to V_h\\
v\mapsto C^hv\nonumber
\end{eqnarray}
with
\begin{eqnarray}
\displaystyle C^hv=\sum_{x\in \mathcal{N}_h(\Omega)}\dfrac{1}{\mbox{Card}(W_x)}
\left(\int_{W_x}v\right)\delta_x,
\end{eqnarray}
where  $\mathcal{N}_h(\Omega)$ is the set of interior vertices, $W_x := \{K\in \mathcal{T}_h : x\in  \mathcal{N}_h(\Omega)\}$.
\edfn
This operator checks certain properties of local approximations .
\bpro\cite{CL}
There exist $c_1$ and $c_2$ strictly positive real constants such that: 
\begin{equation}
\label{cl1}
\forall v\in H^m(\Delta_K), 1\leqslant m\leqslant k+1,
\left\|v-C^hv\right\|_{l,K}\leqslant c_1h^{m-l}_K\|v\|_{H^m(\Delta_K)},
\end{equation}
and 
\begin{equation}
\label{cl2}
\forall v\in H^m(\Delta_e), 1\leqslant m\leqslant k+1,
\left\|v-C^hv\right\|_{l,e}\leqslant c_2h^{m-l-\frac{1}{2}}_K\|v\|_{H^m(\Delta_e)},
\end{equation}
with $\Delta_K=\cup\{K'\in \mathcal{T}_h : K'\cap K\neq \phi\}$ and $\Delta_e=\cup\{K'\in \mathcal{T}_h : K'\cap e\neq \phi\}$.
\epro\mbox{ }\\
In addition, the map
$\gamma_1 : \mathcal{C}^2(\Omega)\mapsto L^2(\Gamma)$ is defined by $\gamma_1(v):=\frac{\partial v}{\partial n}$, that is\\  $\gamma_1(v):=\nabla v\cdot n \mbox{ on } \Gamma$
is linear continuous. Then, for $K\in\cT_h$ its restriction\\
$\gamma_1 : H^2(K)\mapsto H^{3/2}(\partial K)\subset L^2(\partial K)$
is continuous and there exists, $ \alpha >0 $ such that for all $v\in H^2(K)$
$\|\gamma_1(v)\|_{L^2(\partial K)}\leqslant \alpha\|v\|_{H^2(K)}.$
By replacing $v$ by $v-v_h$ and the fact that $e\subset \partial K$  we obtain
$\|\gamma_1(v-v_h)\|_{L^2(e)}\leqslant \alpha\|v-v_h\|_{H^2(K)}.$
Hence,
\begin{equation}
\left\|\dfrac{\partial (v-v_h)}{\partial n}\right\|_{L^2(e)}\leqslant \alpha\|v-v_h\|_{H^2(K)}.
\end{equation}
By taking $v_h=C^hv$ and  $(l,m)=(2,2)$, we obtain:
$\|v-v_h\|_{H^2(K)}\leqslant c_1\|v\|_{H^2(\Delta_K)}$ and 
\begin{equation}
\label{C1}
\left\|\dfrac{\partial (v-v_h)}{\partial n}\right\|_{L^2(e)}\leqslant c_2\alpha h_K^{-1/2} \|v\|_{H^2(\Delta_K)}.
\end{equation}
While, for $ (m,l)=(2,0)$ we have:
\begin{equation}
\label{C2}
\|v-C^hv\|_{L^2(K)}\leqslant c_1h^{2}_K\|v\|_{H^2(\Delta_K)}.
\end{equation}
\begin{equation}
\label{C3}
\|v-C^hv\|_{L^2(e)}\leqslant c_2h^{3/2}_K\|v\|_{H^2(\Delta_e)}.
\end{equation}
where $c_1, c_2$  are constants independent of $h$. 
The reliability of the family $(\eta_K)_{K\in \mathcal{T}_h}$ is finally justified by the following
theorem: 
\bthm (\textbf{Upper error bound})
 Let $u\in V$ be the exact solution of the nonlinear problem $(\ref{p})$ and let $u^{n+1}_h\in V_h$ be its approximation in the sense of finite elements for the iterative algorithm $(\ref{al})$.
Then there exists a strictly positive real constant  $C_{rel}$ independant of $n$, such that: 
\begin{equation}
\label{in}
\|u-u^{n+1}_h\|_V\leqslant \displaystyle C_{rel}\left[\left(\sum_{K\in \mathcal{T}_h}\eta^{2}_{K,n}\right)^{1/2}+\left(\sum_{K\in \mathcal{T}_h}h^4_K\|f-f_h\|^2_{L^2(K)}\right)^{1/2}\right],
\end{equation}
where  $\left\{\eta_{K,n}\right\}_{K\in \cT_h}$ is defined in 
definition \ref{indicator}.
\ethm
\begin{proof}
For the proof of this theorem, we recall the residue.
\begin{eqnarray*}
R^h(u^{n+1}_h,v)=\displaystyle\sum_{K\in \mathcal{T}_h}\Big(\int_{K}(f-f_h)(v-v_h)dx\\
+\displaystyle\int_{K}(f_h-\Delta^2 u^{n+1}_h-\lambda|u^n_h|^{2p}u^n_h)(v-v_h)dx\\
-\displaystyle\sum_{e\in \varepsilon(K)}\int_{e}\Delta u^{n+1}_h\frac{\partial(v-v_h)}{\partial n}ds
+\displaystyle\dfrac{1}{2}\sum_{e\in \varepsilon(K)}\int_{e}[\frac{\partial\Delta u^{n+1}_h}{\partial n}](v-v_h)ds\\
+\lambda \int_{K}|u^{n}_h|^{2p}(u^{n}_h-u^{n+1}_h)(v-v_h)dx\Big),
\end{eqnarray*}
which implies,
\begin{eqnarray*}
|R^h(u^{n+1}_h,v)|\leqslant\displaystyle\sum_{K\in \mathcal{T}_h}\Big(\int_{K}|f-f_h||v-v_h|dx\\
+\displaystyle\int_{K}|f_h-\Delta^2 u^{n+1}_h-\lambda|u^n_h|^{2p}u^n_h||v-v_h|dx\\
+\displaystyle\sum_{e\in \varepsilon(K)}\int_{e}|\Delta u^{n+1}_h|\left|\frac{\partial(v-v_h)}{\partial n}\right|ds
+\displaystyle\dfrac{1}{2}\sum_{e\in \varepsilon(K)}\int_{e}\left|\left[\frac{\partial\Delta u^{n+1}_h}{\partial n}\right]\right||v-v_h|ds\\+\lambda \int_{K}|u^{n}_h|^{2p}|u^{n}_h-u^{n+1}_h||v-v_h|dx\Big).
\end{eqnarray*}
The Cauchy-Schwarz inequality leads to
\begin{eqnarray*}
|R^h(u^{n+1}_h,v)|\leqslant\displaystyle\sum_{K\in \mathcal{T}_h}\Big(\|f-f_h\|_{L^2(K)}\|v-v_h\|_{L^2(K)}\\
+\|f_h-\Delta^2 u^{n+1}_h-\lambda|u^n_h|^{2p}u^n_h\|_{L^2(K)}\|v-v_h\|_{L^2(K)}\\
+\displaystyle\sum_{e\in \varepsilon(K)}\|\Delta u^{n+1}_h\|_{L^2(e)}\left\|\frac{\partial(v-v_h)}{\partial n}\right\|_{L^2(e)}\\
+\displaystyle\frac{1}{2}\sum_{e\in \varepsilon(K)}\left\|\left[\frac{\partial \Delta u^{n+1}_h}{\partial n}\right]\right\|_{L^2(e)}\|v-v_h\|_{L^2(e)}\\
+\lambda c^{2p}_1\|f\|^{2p}_{-2,K}\|u^{n}_h-u^{n+1}_h\|_{L^2(K)}\|v-v_h\|_{L^2(K)}\Big).
\end{eqnarray*}
Taking $v_h$ as Clément's interpolated we obtain,
\begin{eqnarray*}
|R^h(u^{n+1}_h,v)|\leqslant\displaystyle\sum_{K\in \mathcal{T}_h}\Big(\|f-f_h\|_{L^2(K)}\|v-C^hv\|_{L^2(K)}\\
+\|f_h-\Delta^2 u^{n+1}_h-\lambda|u^n_h|^{2p}u^n_h\|_{L^2(K)}\|v-C^hv\|_{L^2(K)}\\
+\displaystyle\sum_{e\in \varepsilon(K)}\|\Delta u^{n+1}_h\|_{L^2(e)}\left\|\frac{\partial(v-v_h)}{\partial n}\right\|_{L^2(e)}\\
+\displaystyle\frac{1}{2}\sum_{e\in \varepsilon(K)}\left\|\left[\frac{\partial \Delta u^{n+1}_h}{\partial n}\right]\right\|_{L^2(e)}\|v-C^hv\|_{L^2(e)}\\
+\lambda c^{2p}_1\|f\|^{2p}_{-2,K}\|u^{n}_h-u^{n+1}_h\|_{L^2(K)}\|v-C^hv\|_{L^2(K)}\Big).
\end{eqnarray*}
Subsequently using the inequalities defined in \eqref{C1}, \eqref{C2} and \eqref{C3} we get,
\begin{eqnarray*}
|R^h(u^{n+1}_h,v)|\leqslant\displaystyle\sum_{K\in \mathcal{T}_h}\Big(\alpha_1h^2_K\|f-f_h\|_{L^2(K)}\|v\|_{H^2(\Delta_K)}\\
+\alpha_1h^2_K\|f_h-\Delta^2 u^{n+1}_h-\lambda|u^n_h|^{2p}u^n_h\|_{L^2(K)}\|v\|_{H^2(\Delta_K)}\\
+\displaystyle\alpha c_3\sum_{e\in \varepsilon(K)}\|\Delta u^{n+1}_h\|_{L^2(e)}\|v\|_{H^2(\Delta_K)}\\
+\displaystyle\frac{1}{2}\sum_{e\in \varepsilon(K)}c_4h^{3/2}_e\left\|\left[\frac{\partial \Delta u^{n+1}_h}{\partial n}\right]\right\|_{L^2(e)}\|v\|_{H^2(\Delta_e)}\\
+\lambda \alpha_1c^{2p}_1\|f\|^{2p}_{-2,K}h^2_K\|u^{n}_h-u^{n+1}_h\|_{L^2(K)}\|v\|_{H^2(\Delta_K)}\Big).
\end{eqnarray*}
Sinve  $\|v\|_{H^2(\Delta_K)}\leqslant \|v\|_{H^2(\Omega)}$ and $\|v\|_{H^2(\Delta_e)}\leqslant \|v\|_{H^2(\Omega)}$,
furthermore, $\|\cdot\|_{H^2(\Omega)}$ is equivalent to $\|\cdot\|_V$ \ on\  $H^2_0(\Omega)$.
Therefore,
\begin{eqnarray*}
|R^h(u^{n+1}_h,v)|\leqslant\displaystyle\sum_{K\in \mathcal{T}_h}\Big[\alpha_1h^2_K\|f-f_h\|_{L^2(K)}\\
+\alpha_1h^2_K\|f_h-\Delta^2 u^{n+1}_h-\lambda|u^n_h|^{2p}u^n_h\|_{L^2(K)}\\
+\displaystyle\alpha c_3\sum_{e\in \varepsilon(K)}\|\Delta u^{n+1}_h\|_{L^2(e)}\\
+\displaystyle\frac{1}{2}\sum_{e\in \varepsilon(K)}c_4h^{3/2}_e\|[\frac{\partial \Delta u^{n+1}_h}{\partial n}]\|_{L^2(e)}\\
+\lambda \alpha c'_2c^{2p}_1\|f\|^{2p}_{-2,K}h^2_K|u^{n}_h-u^{n+1}_h|_{2,K}\Big]\|v\|_V,
\end{eqnarray*}
which lead to
\begin{eqnarray*}
\dfrac{|R^h(u^{n+1}_h,v)|}{\|v\|_V}\leqslant\displaystyle\sum_{K\in \mathcal{T}_h}\Big[\alpha_1h^2_K\|f-f_h\|_{L^2(K)}\\
+\alpha_1h^2_K\|f_h-\Delta^2 u^{n+1}_h-\lambda|u^n_h|^{2p}u^n_h\|_{L^2(K)}\\
+\displaystyle\alpha c_3\sum_{e\in \varepsilon(K)}\|\Delta u^{n+1}_h\|_{L^2(e)}\\
+\displaystyle\frac{1}{2}\sum_{e\in \varepsilon(K)}c_4h^{3/2}_e\|[\frac{\partial \Delta u^{n+1}_h}{\partial n}]\|_{L^2(e)}\\
+\lambda \alpha c'_2c^{2P}_1\|f\|^{2p}_{-2,K}h^2_K|u^{n}_h-u^{n+1}_h|_{2,K}\Big].
\end{eqnarray*}

By setting $C= \max\{\alpha_1c^{2p}_1\|f\|^{2p}_{-2,\Omega},\alpha_1, \alpha_1c_3 , c_4\}> 0$ we obtain: 
\begin{eqnarray}
\dfrac{|R^h(u^{n+1}_h,v)|}{\|v\|_V}\leqslant\displaystyle C\sum_{K\in \mathcal{T}_h}\Big[h^2_K\|f-f_h\|_{L^2(K)}\\\\
+h^2_K\|f_h-\Delta^2 u^{n+1}_h-\lambda|u^n_h|^{2p}u^n_h\|_{L^2(K)}
+\displaystyle\sum_{e\in \varepsilon(K)}\|\Delta u^{n+1}_h\|_{L^2(e)}\\\\
+\displaystyle\frac{1}{2}\sum_{e\in \varepsilon(K)}h^{3/2}_e\|[\frac{\partial \Delta u^{n+1}_h}{\partial n}]\|_{L^2(e)}
+\lambda h^2_K|u^{n}_h-u^{n+1}_h|_{2,K}\Big].
\end{eqnarray}
Thus, the set $\left\{\dfrac{|R^h(u^{n+1}_h,v)|}{\|v\|_V} :v\in V^*\right\}$ vis non-empty and increased by $\R$. Therefore,
\begin{eqnarray*}
\sup_{v\in V^*}\dfrac{|R^h(u^{n+1}_h,v)|}{\|v\|_V}\leqslant \displaystyle C\sum_{K\in \mathcal{T}_h}\Big[
h^2_K\|f_h-\Delta^2 u^{n+1}_h-\lambda|u^n_h|^{2p}u^n_h\|_{L^2(K)}\\
+\displaystyle\sum_{e\in \varepsilon(K)}\|\Delta u^{n+1}_h\|_{L^2(e)}
+\displaystyle\frac{1}{2}\sum_{e\in \varepsilon(K)}h^{3/2}_e\|[\frac{\partial \Delta u^{n+1}_h}{\partial n}]\|_{L^2(e)}\\+h^2_K\|f-f_h\|_{L^2(K)}
+\lambda h^2_K|u^{n}_h-u^{n+1}_h|_{2,K}\Big].
\end{eqnarray*}
We deduce that,
\begin{equation*}
\label{}
\|u-u^{n+1}_h\|_V\leqslant \displaystyle C\sum_{K\in \mathcal{T}_h}\big(\eta^{(D)}_K+h^2_k\|f-f_h\|_{L^2(K)}+\eta^{(L)}_K\big),
\end{equation*}
with 
\begin{eqnarray*}
\eta^{(D)}_{K,n}&=& h^2_K\|f_h-\Delta^2 u^{n+1}_h-\lambda|u^n_h|^{2p}u^n_h\|_{L^2(K)}
+\displaystyle\sum_{e\in \varepsilon(K)}\|\Delta u^{n+1}_h\|_{L^2(e)}\\
&+&\displaystyle\frac{1}{2}\sum_{e\in \varepsilon(K)}h^{3/2}_e\left\|[\frac{\partial \Delta u^{n+1}_h}{\partial n}]\right\|_{L^2(e)} \mbox{ and }
\\
\eta^{(L)}_{K,n}&=&\lambda h^2_K|u^n_h-u^{n+1}_h|_{2,K}
\end{eqnarray*}
Using the Cauchy-Schwarz inequality twice we have : 
\begin{equation*}
\|u-u^{n+1}_h\|_V\leqslant \displaystyle C_{rel}\left[\left(\sum_{K\in \mathcal{T}_h}\eta^{2}_{K,n}\right)^{1/2}+\left(\sum_{K\in \mathcal{T}_h}h^4_K\|f-f_h\|^2_{L^2(K)}\right)^{1/2}\right],
\end{equation*}
with $C_{rel}=C\left[Card(\mathcal{T}_h)\right]^{1/2}> 0$.
\end{proof}
\brmq
From there, we see that the error is increased by a calculable quantity. So, the error is controlled and we talk about reliability.
\ermq
\subsubsection{Optimality of indicators}\label{lower}
We show in this subsection that the family of a posteriori error indicators $(\eta_{K,n})_{K\in \mathcal{T}_{h}}$ forms a good error map. For this we will need some tools.

\bdfn (\textbf{Bubble functions}) 
The bubble function $\psi_K$ on a mesh $K$ is an element of $\mathbb{P}_{d+1}(K)$ is defined by :
\begin{equation}
\psi_K :=(d+1)^{d+1}\times\displaystyle\prod_{i=1}^{d+1}\lambda_{i,K},
\end{equation}
where $\lambda_{i,K}$ are barycentric coordinate functions associated with $K$.
If $e\in \varepsilon(K)$, we define $\psi_{e,K}\in \mathbb{P}_d(K)$ by :
$
\psi_{e,K} :=(d)^{d}\times\displaystyle\prod_{i=1}^{d}\lambda_{x_i,K}$
and we then define $\psi_e$ on $W_e:=K_1\cup K_2$ where $e=\partial K_1\cap \partial K_2$ by :
$
\psi_{e/K_i} :=\psi_{e,K_i}\  \forall i=1,2.$

\edfn
\mbox{ }\\
 Bubble functions check the following properties\cite{W1}, namely,
$\psi_{K}=0 \text{ on } \partial_K$; \\
$\psi_{e}=0 \text{ on } \partial{W_e}$;
$0\leqslant\psi_{K}\leqslant1$;
$0\leqslant\psi_{e}\leqslant1$ and
$\|\psi_{K}\|_{L^{\infty}(K)}=\|\psi_{e}\|_{L^{\infty}(W_e)}=1.$
The fact that the bubble functions are zero on $\partial K$ or on $\partial {W_e}$ makes it possible to
cancel the edge terms involved in the calculations.

\bdfn\cite{christine} (\textbf{Extension operator}) We define the raising or extension operator as follows :\\
$\begin{array}{cccc}
\mathcal{L}_{e,K} : &C^0(e)&\to&C^0(\bar{K}),\\
&v&\mapsto&\mathcal{L}_{e,K}v, 
\end{array}$
such that  $\mathcal{L}_{e,K}v|_e=v$.
\edfn
These bubble and extension operator functions check the following inequalities.
\blem(\textbf{Inverse inequalities})\cite{Verf}
Let  $(\mathcal{T}_h)_{h>0}$ be a regular triangulation family on $\bar{\Omega}$. Then, for all $v_K\in \mathbb{P}_{k_0}(K)$ and $v_e\in \mathbb{P}_{k_1}(e)$ with $e\in \varepsilon(K)$, we have the following equivalences and inequalities \cite{W1}:
There exist strictly positive real constants $c_1$ et $c_2$ independent of the $v_K$ and $v_e$ functions such that :
\begin{equation}
\label{inv}
\begin{array}{cccc}
& c_1\|v_K\|_{L^2(K)}\leqslant
\|v_K\psi^{1/2}_K\|_{L^2(K)}\leqslant c_2\|v_K\|_{L^2(K)}.\\\\
& c_1\|v_e\|_{L^2(e)}\leqslant
\|v_e\psi^{1/2}_e\|_{L^2(e)}\leqslant c_2\|v_e\|_{L^2(e)}.\\\\
&\|\nabla^l(v_K)\|_{L^2(K)}\leqslant c_1h^{-l}_K\|v_K\|_{L^2(K)}.\\\\
&\|\nabla^l(v_K\psi_K)\|_{L^2(K)}\leqslant h^{-l}_K\|v_K\|_{L^2(K)}.\\\\
&\|\nabla^{l'}(v_e\psi_e)\|_{L^2(e)}\leqslant h^{-l'+1/2}_K\|v_e\|_{L^2(e)}.\\\\
&\|\mathcal{L}_{e,K}(v_e) \psi_e\|_{L^2(K)}\leqslant h^{1/2}_e\|v_e\|_{L^2(e)}.\\\\
&\|\nabla^l(\mathcal{L}_{e,K}(v_e)\psi_e)\|_{L^2(K)}\leqslant h^{1/2-l}_e\|v_e\|_{L^2(e)}.
\end{array}
\end{equation}
\elem

In this subsection we give a reduction of the a posteriori error indicators of discretization and linearization.
\bthm (\textbf{Lower error bound}) Let $u\in V$ be the exact solution of the nonlinear problem $(\ref{p})$, $u^{n+1}_h\in V_h$ be the solution of the iterative problem $(\ref{al})$. Then, there exists a strictly positive real constant 
$C_{eff}$ such that
for each  $K\in \mathcal{T}_h$, we have the following estimates:
\begin{eqnarray*}
\eta^{(D)}_{K,n}\leqslant C_{eff} \left(|u-u_h^{n+1}|_{2,W_K}+\displaystyle\sum_{\kappa\subset W_K}h^2_{\kappa}\left(\|f-f_h\|_{L^2(\kappa)}+\lambda |u_h^n-u_h^{n+1}|_{2,\kappa}\right)\right),
\end{eqnarray*}
and 
\begin{eqnarray*}
\eta^{(L)}_{K,n}\leqslant \lambda h^2_K\left(|u-u_h^{n+1}|_{2,K}+|u-u_h^n|_{2,K}\right).
\end{eqnarray*}
\ethm
\begin{proof}
For the proof, we will first increase each of the terms of the discretization error indicator and then give an
increase of the local discretization error indicator.
To do this, we recall the residual equation.
\begin{eqnarray}\label{residu}\nonumber
R^h(u^{n+1}_h,v)=\displaystyle\sum_{K\in \mathcal{T}_h}\Big(\int_{K}(f-f_h)(v)dx\\\nonumber
+\displaystyle\int_{K}(f_h-\Delta^2 u^{n+1}_h-\lambda|u^n_h|^{2p}u^n_h)(v)dx\\
-\displaystyle\sum_{e\in \varepsilon(K)}\int_{e}\Delta u^{n+1}_h\frac{\partial(v)}{\partial n}ds
+\displaystyle\dfrac{1}{2}\sum_{e\in \varepsilon(K)}\int_{e}[\frac{\partial\Delta u^{n+1}_h}{\partial n}](v)ds\\\nonumber
+\lambda \int_{K}|u^{n}_h|^{2p}(u^{n}_h-u^{n+1}_h)(v)dx\Big).
\end{eqnarray}
We bound each term of the residual separately.
\begin{enumerate}
	\item 
For all $K\in \mathcal{T}_h$, we take $v$ equal to $v_K$ with \\
$v_K=\begin{cases}
& (f_h-\Delta^2 u^{n+1}_h-\lambda|u^n_h|^{2p}u^n_h)\psi_K\ on \ K\\
&  0\ on\ \Omega\smallsetminus K,
\end{cases}
$\\
where $\psi$ is the bubble
function on $K$.
We get
\begin{eqnarray*}
R^h(u^{n+1}_h,v_K)=\int_{K}(f-f_h)v_Kdx
+\displaystyle\int_{K}(f_h-\Delta^2 u^{n+1}_h-\lambda|u^n_h|^{2p}u^n_h)v_Kdx\\
+\lambda \int_{K}|u^{n}_h|^{2p}(u^{n}_h-u^{n+1}_h)v_Kdx.
\end{eqnarray*}
We have: 
\begin{eqnarray*}
\int_K(f_h-\Delta^2 u^{n+1}_h-\lambda|u^n_h|^{2p}u^n_h)^2\psi_Kdx = R^h(u^{n+1}_h,v_K)-\int_K(f-f_h)v_Kdx\\
- \lambda\int_K|u^n_h|^{2p}(u^n_h-u^{n+1}_h)v_Kdx \\
\int_K(f_h-\Delta^2 u^{n+1}_h-\lambda|u^n_h|^{2p}u^n_h)^2\psi_Kdx=\int_K\Delta(u-u^{n+1}_h)\cdot\Delta v_Kdx\\
-\int_K(f-f_h)v_Kdx
- \lambda\int_K|u^n_h|^{2p}(u^n_h-u^{n+1}_h)v_Kdx. 
\end{eqnarray*}
\begin{eqnarray*}
\|(\Delta u^{n+1}_h+f_h-\lambda|u^n_h|^{2p})\psi^{1/2}_K\|^2_{L^2(K)}\leqslant \int_K|\Delta(u-u^{n+1}_h)\Delta v_K|dx\\+\int_K|(f-f_h)v_K|dx
+ \lambda\int_K|u^n_h|^{2p}|(u^n_h-u^{n+1}_h)||v_K|dx.
\end{eqnarray*}
\begin{eqnarray*}
\|(f_h-\Delta^2 u^{n+1}_h+f_h-\lambda|u^n_h|^{2p})\psi^{1/2}_K\|^2_{L^2(K)} \leqslant  |u-u^{n+1}_h|_{2,K}|v_K|_{2,K}\\\\+||f-f_h||_{L^2(K)}||v_K||_{L^2(K)}\\
+\lambda c'_2c^{2p}_1\|f\|_{-2,K}|u^n_h-u^{n+1}_h|_{2,K}\|v_K\|_{L^2(K)}.\\
\end{eqnarray*}
As $\psi_K\leqslant 1, c_1\|v_K\|_{L^2(K)}\leqslant
\|v_K\psi^{1/2}_K\|_{L^2(K)}$ and  $\|\nabla^2(v_K\psi_K)\|_{L^2(K)}\leqslant h^{-2}_K\|v_K\|_{L^2(K)}.$
we have : 
\begin{eqnarray*}
\xi^2_2\|(f_h-\Delta^2 u^{n+1}_h-\lambda|u^n_h|^{2p})\|^2_{L^2(K)}\leqslant  \Big(\xi_1 h^{-2}_K|u-u^{n+1}_h|_{2,K}+\|f-f_h\|_{L^2(K)}\\\\
+\lambda c'_2c^{2p}_1\|f\|_{-2,K}|u^n_h-u^{n+1}_h|_{2,K}\Big)\|(f_h-\Delta^2 u^{n+1}_h-\lambda|u^n_h|^{2p})\|_{L^2(K)}.
\end{eqnarray*}
By setting $A'=c'_2c^{2p}_1\|f\|_{-2,K}> 0$ , by simplifying and multiplying by $h^2_K$ we have :
\begin{eqnarray*}
h^2_K\|f_h-\Delta^2 u^{n+1}_h-\lambda|u^n_h|^{2p}\|_{L^2(K)}\leqslant  \dfrac{\xi_1}{\xi^2_2}|u-u^{n+1}_h|_{2,K}+\dfrac{1}{\xi^2_2}h^2_K\|f-f_h\|_{L^2(K)}\\+\lambda \dfrac{1}{\xi^2_2}A'|u^n_h-u^{n+1}_h|.
\end{eqnarray*}
By multiplying the inequality by $h_K$ by setting $\xi =\max\left\{\dfrac{\xi_1}{\xi^2_2},\dfrac{1}{\xi^2_2},\dfrac{1}{\xi^2_2}A'\right\}$ we obtain:
\begin{eqnarray*}
h^2_K\|f_h-\Delta^2 u^{n+1}_h-\lambda|u^n_h|^{2p}\|_{L^2(K)}\leqslant  \xi\Big(|u-u^{n+1}_h|_{2,K}+h^2_K\|f-f_h\|_{L^2(K)}\\
+\lambda h^2_K|u^n_h-u^{n+1}_h|\Big).
\end{eqnarray*}
Thus, the first term of $\eta^{(D)}_{K,n}$ is increased.
\item  Let us major the second term of $\eta^{(D)}_{K,n}$. 
For all $K$ in $\mathcal{T}_h$ and $e\in \varepsilon(K)$, we set $W_e=K\cup K'$ such that $e=\partial K \cap \partial K'$.
We replace in the residue equation the $v$ by $v_e$ with 
$$v_e=\begin{cases}
\mathcal{L}_{e,\kappa}\left([\dfrac{\partial\Delta u^{n+1}_h}{\partial n}]\psi_{e}\right) \text{ on } \kappa\in \{K,K'\}\\
0 \text{ on } \Omega\smallsetminus W_e.
\end{cases}$$
where $\psi_{e}$ denotes the bubble function on $e$.

\begin{eqnarray*}
R^h(u^{n+1}_h,v)=\int_{W_e}(f-f_h)v_edx
+\displaystyle\int_{W_e}(f_h-\Delta^2 u^{n+1}_h-\lambda|u^n_h|^{2p}u^n_h)v_edx\\
-\int_{e}\Delta u^{n+1}_h\frac{\partial(v_e)}{\partial n}ds
+\frac{1}{2}\int_{e}[\frac{\partial\Delta u^{n+1}_h}{\partial n}]v_eds+\lambda \int_{W_e}|u^{n}_h|^{2p}(u^{n}_h-u^{n+1}_h)v_edx.
\end{eqnarray*}

\begin{eqnarray*}
\frac{1}{2}\int_{e}[\frac{\partial\Delta u^{n+1}_h}{\partial n}]v_eds =-R^h(u^{n+1}_h,v)+\int_{W_e}(f-f_h)v_edx\\
+\displaystyle\int_{W_e}(f_h-\Delta^2 u^{n+1}_h-\lambda|u^n_h|^{2p}u^n_h)v_edx\\
-\int_{e}\Delta u^{n+1}_h\frac{\partial(v_e)}{\partial n}ds
+\lambda \int_{W_e}|u^{n}_h|^{2p}(u^{n}_h-u^{n+1}_h)v_edx.
\end{eqnarray*}

\begin{eqnarray*}
\frac{1}{2}\int_{e}[\frac{\partial\Delta u^{n+1}_h}{\partial n}]v_eds =-\int_{W_e}\Delta(u-u^{n+1}_h)\cdot\Delta v_edx+\int_{W_e}(f-f_h)v_edx\\
+\displaystyle\int_{W_e}(f_h-\Delta^2 u^{n+1}_h-\lambda|u^n_h|^{2p}u^n_h)v_edx\\
-\int_{e}\Delta u^{n+1}_h\frac{\partial(v_e)}{\partial n}ds
+\lambda \int_{W_e}|u^{n}_h|^{2p}(u^{n}_h-u^{n+1}_h)v_edx.
\end{eqnarray*}

\begin{eqnarray*}
\frac{1}{2}\int_{e}[\frac{\partial\Delta u^{n+1}_h}{\partial n}]^2\psi_eds \leqslant\|\Delta(u-u^{n+1}_h)\|_{L^2(W_e)}\|\Delta v_e\|_{L^2(W_e)}\\+\|f-f_h\|_{L^2(W_e)}\|v_e\|_{L^2(W_e)}\\
+\|f_h-\Delta^2 u^{n+1}_h-\lambda|u^n_h|^{2p}u^n_h\|_{L^2(W_e)}\|v_e\|_{L^2(W_e)}\\
+\|\Delta u^{n+1}_h\|_{L^2(e)}\|\frac{\partial(v_e)}{\partial n}\|_{L^2(e)}
+\lambda c_1^{2p}|f|^{2p}_{-2,W_e}\|u^{n}_h-u^{n+1}_h\|_{L^2(W_e)}\|v_e\|_{L^2(W_e)}.
\end{eqnarray*}

\begin{eqnarray*}
\frac{1}{2}\int_{e}[\frac{\partial\Delta u^{n+1}_h}{\partial n}]^2\psi_eds \leqslant|u-u^{n+1}_h|_{2,W_e}\|\nabla^2(\mathcal{L}_{e,\kappa}([\dfrac{\partial\Delta u^{n+1}_h}{\partial n}]\psi_{e})) \|_{L^2(W_e)}\\\\+\|f-f_h\|_{L^2(W_e)}\|\mathcal{L}_{e,\kappa}([\dfrac{\partial\Delta u^{n+1}_h}{\partial n}]\psi_{e})\|_{L^2(W_e)}\\\\
+\|f_h-\Delta^2 u^{n+1}_h-\lambda|u^n_h|^{2p}u^n_h\|_{L^2(W_e)}\|\mathcal{L}_{e,\kappa}([\dfrac{\partial\Delta u^{n+1}_h}{\partial n}]\psi_{e})\|_{L^2(W_e)}\\\\
+\|\Delta u^{n+1}_h\|_{L^2(e)}\|\frac{\partial}{\partial n}([\dfrac{\partial(\Delta u^{n+1}_h)}{\partial n}]\psi_{e})\|_{L^2(e)}\\\\
+\lambda c_1^{2p}|f|^{2p}_{-2,W_e}\|u^{n}_h-u^{n+1}_h\|_{L^2(W_e)}\|v_e\|_{L^2(W_e)}.
\end{eqnarray*}
Moreover $\left\|\tfrac{\partial }{\partial n}([\dfrac{\partial (\Delta u^{n+1}_h)}{\partial n}]\psi_{e})\right\|_{L^2(e)}=\left(\displaystyle\int_{e}|\nabla([\dfrac{\partial(\Delta u^{n+1}_h)}{\partial n}]\psi_{e})\cdot n|^2ds\right)^{1/2}$\\
and $\left|\nabla([\dfrac{\partial(\Delta u^{n+1}_h)}{\partial n}]\psi_{e})\cdot n\right|^2\leqslant \left\|\nabla([\dfrac{\partial(\Delta u^{n+1}_h)}{\partial n}]\psi_{e})\right\|^2_{L^2(e)}\|n\|^2_{\R^d}$.\\
We have :\\ $\left(\int_{e}|\nabla([\dfrac{\partial(\Delta u^{n+1}_h)}{\partial n}]\psi_{e})\cdot n|^2ds\right)^{1/2}\leqslant (mes(e))^{1/2}\|\nabla([\dfrac{\partial(\Delta u^{n+1}_h)}{\partial n}]\psi_{e})\|_{L^2(e)}\|n\|_{\R^d}$.\\
As $n$ is unitary then $\|n\|_{\R^d}=1$ and further $mes(e)=h_e$ we obtain:\\
$\left\|\tfrac{\partial }{\partial n}([\dfrac{\partial (\Delta u^{n+1}_h)}{\partial n}]\psi_{e})\right\|_{L^2(e)}\leqslant h_e^{1/2}\left\|\nabla([\dfrac{\partial(\Delta u^{n+1}_h)}{\partial n}]\psi_{e})\right\|_{L^2(e)}$.\\\\
According to the inverse inequalities for the extension operator we obtain:
\begin{equation*}
\begin{array}{cccc}
 &\left\|\mathcal{L}_{e,K}\left([\dfrac{\partial (\Delta u^{n+1}_h)}{\partial n}]\psi_{e}\right)\right\|_{L^2(K)}\leqslant c'_1h^{1/2}_e\left\|[\dfrac{\partial (\Delta u^{n+1}_h)}{\partial n}]\right\|_{L^2(e)}.\\\\
 &\left\|\nabla(\mathcal{L}_{e,K}\left([\dfrac{\partial (\Delta u^{n+1}_h)}{\partial n}]\psi_{e})\right)\right\|_{L^2(K)}\leqslant c''_1h^{-1/2}_e\left\|[\dfrac{\partial (\Delta u^{n+1}_h)}{\partial n}]\right\|_{L^2(e)}.\\\\
 &\left\|\nabla^2(\mathcal{L}_{e,K}\left([\dfrac{\partial (\Delta u^{n+1}_h)}{\partial n}]\psi_{e})\right)\right\|_{L^2(K)}\leqslant c_1h^{-3/2}_e\left\|[\dfrac{\partial (\Delta u^{n+1}_h)}{\partial n}]\right\|_{L^2(e)}.
\end{array}
\end{equation*}
Thank to the inverse inequality on $\psi_e$, there is a real constant $c'_1>0$ such that
$$c_1\left\|[\tfrac{\partial \Delta u^{n+1}_h}{\partial n}]\right\|_{L^2(e)}\leqslant\left\|[\tfrac{\partial\Delta u^{n+1}_h}{\partial n}]\psi^{1/2}_e\right\|_{L^2(e)}.$$

Finally, by multiplying by $h^{3/2}_e$ and using the fact that $h_e\leqslant h_K$ we get:\\
$\begin{array}{ccc}
\frac{1}{2}\displaystyle\sum_{e\in\varepsilon_K}h^{3/2}_e\|[\frac{\partial\Delta u^{n+1}_h}{\partial n}]\|_{L^2(e)} \leqslant\displaystyle\sum_{\kappa\in \{K,K'\}}\Big[\xi_1|u-u^{n+1}_h|_{2,\kappa}+\xi'_1h^2_{\kappa}\|f-f_h\|_{L^2(\kappa)}\\\\+\xi'_1\alpha|u-u^{n+1}_h|_{2,\kappa}+\xi'_1\alpha h^2_{\kappa}|f-f_h|_{2,\kappa}\\\\+\xi'_1\alpha\lambda h^2_{\kappa}|u-u^{n+1}_h|_{2,\kappa} +\lambda c_1^{2p}|f|^{2p}_{-2,\kappa}h^2_{\kappa}\|u^{n}_h-u^{n+1}_h\|_{2,\kappa}\Big]\\\\+\xi''_1\displaystyle\sum_{e\in \varepsilon_K}h^{3/2}_e\|\Delta u^{n+1}_h\|_{L^2(e)}.
\end{array}$\\\\

Setting $\xi=\max\left\{\xi_1+\xi'_1\alpha,\xi'_1\alpha+c_1^{2p}|f|^{2p}_{-2,\kappa}\right\}$, we have:
\begin{eqnarray*}
\frac{1}{2}\displaystyle\sum_{e\in\varepsilon_K}h^{3/2}_e\left\|\left[\frac{\partial\Delta u^{n+1}_h}{\partial n}\right]_e\right\|_{L^2(e)} \leqslant\displaystyle\xi\sum_{\kappa\in \{K,K'\}}\Big[|u-u^{n+1}_h|_{2,\kappa}+h^2_{\kappa}\|f-f_h\|_{L^2(\kappa)}\\\\+\lambda h^2_{\kappa}|u-u^{n+1}_h|_{2,\kappa}\Big]+\xi''_1\displaystyle\sum_{e\in \varepsilon_K}h^{3/2}_e\|\Delta u^{n+1}_h\|_{L^2(e)}.
\end{eqnarray*}
\end{enumerate}
We thus obtain an increase of the second term of $\eta^{(D)}_{K,n}$ and the estimation\\
$\eta^{(L)}_{K,n}\leqslant \lambda h_K^2\left(|u-u_h^{n+1}|_{2,K}+|u^n_h-u|_{2,K}\right)$ 
implies the required estimate and finish the
proof.
\end{proof}
\section{Numerical results and discussions}\label{Numerical_results}
In this section, we present numerical results validating the theoretical findings. Figure (Figure \ref{fig:plaque}) gives an illustration of a fixed plate undergoing loads represented by red arrows.
\begin{figure}[H]
	\includegraphics[width=\textwidth]{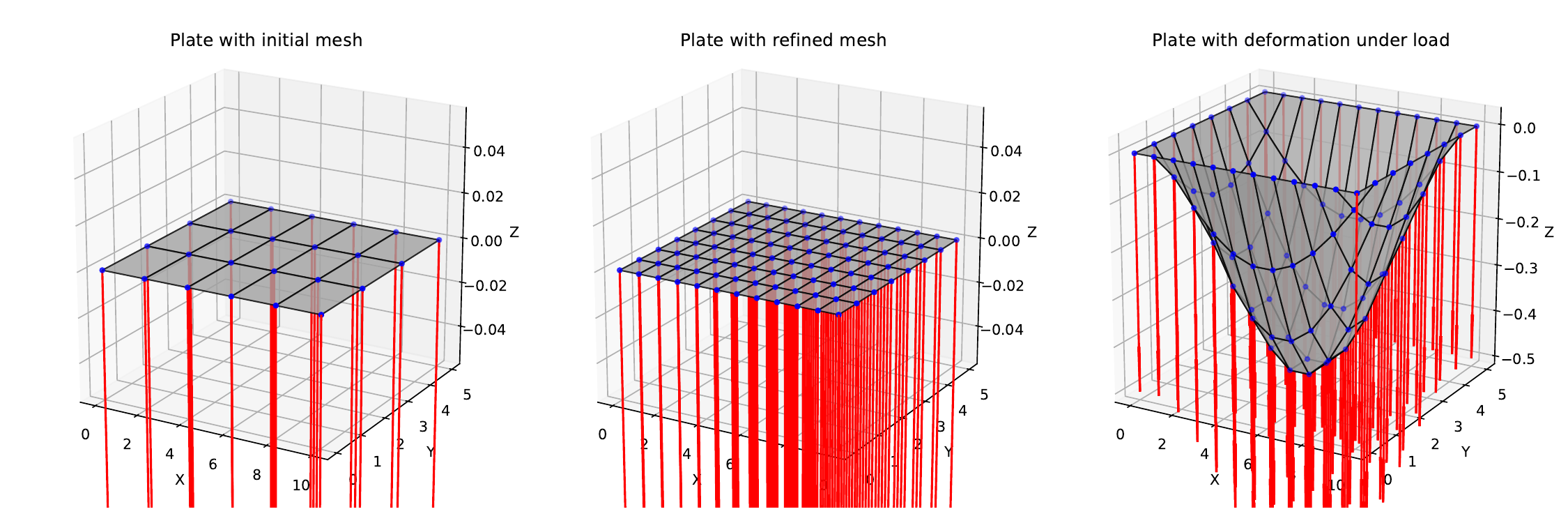} 
	\caption{Plate bending}
	\label{fig:plaque}
\end{figure}
\subsection{Numerical solution}\label{Numerical}
We use the FEniCs software \cite{Fenics}, drawing inspiration from the work presented in the report \cite{Implem}.\
For the unit square domain $\Omega=]0,1[^2$, we generate an initial triangulation (see Figure \ref{fig:Initial_mesh}) and subsequently a Hsieh-Clough-Tocher mesh (see Figure \ref{fig:HCT_mesh}). Considering a synthetic solution $u_e(x,y)=x^2(1-x)^2y^2(1-y)^2$ that satisfies the boundary conditions, we implement the following iterative fixed-point scheme with an initial solution $u^0_h=0.0069$:
\begin{equation}
\label{P_n}
\begin{cases}
\text{ Find } u^{n+1}_h\in V_h \text{ such that},\\
\int_{\Omega}\Delta u^{n+1}_h\Delta v_hdx+\lambda\int_{\Omega}|u^{n}_h|^{2p}u^{n+1}_h v_hdx=\langle f,v_h\rangle \ \forall v_h\in V_h
\end{cases}.
\end{equation} with the classical stopping criterion defined by $err_L = |u^{n+1}_h - u^{n}_h|_{2,\Omega} \leqslant 10^{-7}$.
Figure \ref{fig:Exct_solu} presents the synthetic solution and figure \ref{fig:Numerical_sol} presents the numerical solution after convergence, obtained after a few iterations in a mesh of $384$ cells and $3840$ degrees of freedom with an error in norm $L^2(\Omega)$ equal to $0.00010911603387915538$ and in norm $H^2_0(\Omega)$ equal to $0.005791626721615341$. This result is obtained for $p=1$ and $\lambda=10^4$.
\begin{figure}[H]
	\centering
	\begin{minipage}{0.45\textwidth}
		\centering
		\includegraphics[width=\textwidth]{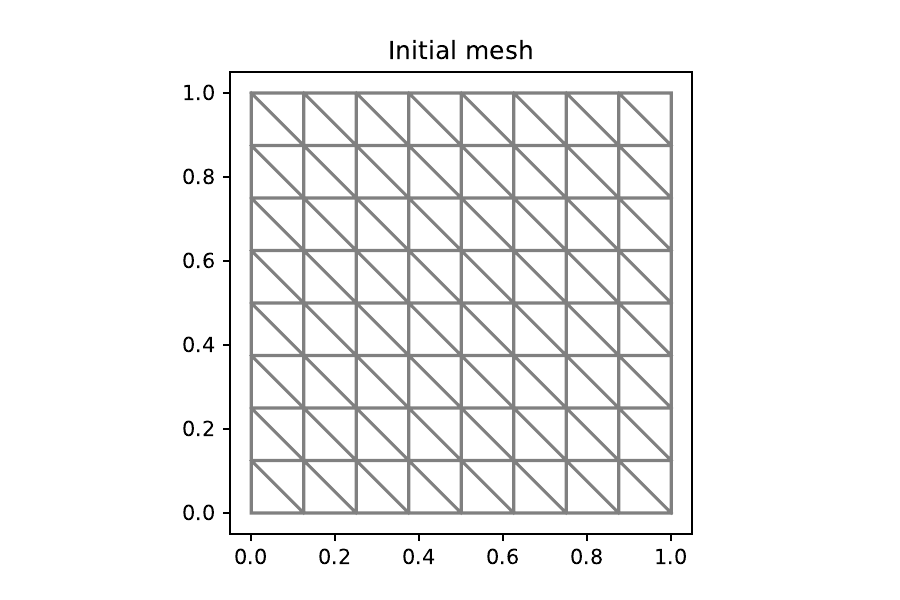}
		\caption{Initial mesh}
		\label{fig:Initial_mesh}
	\end{minipage}\hfill
	\begin{minipage}{0.45\textwidth}
		\centering
		\includegraphics[width=\textwidth]{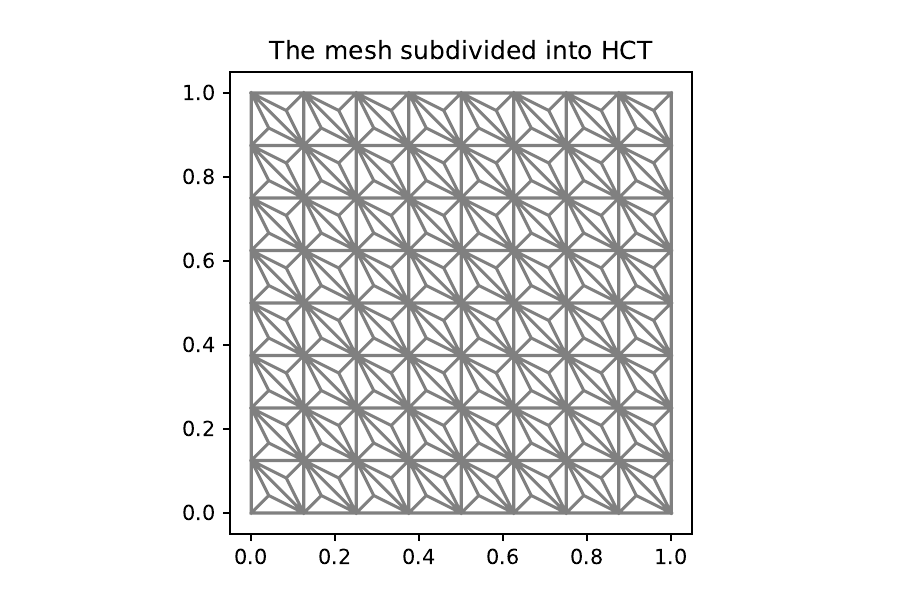}
		\caption{The mesh subdivided into HCT}
		\label{fig:HCT_mesh}
	\end{minipage}
\end{figure}
\begin{figure}[H]
	\centering
	\begin{minipage}{0.45\textwidth}
		\centering
		\includegraphics[width=\textwidth]{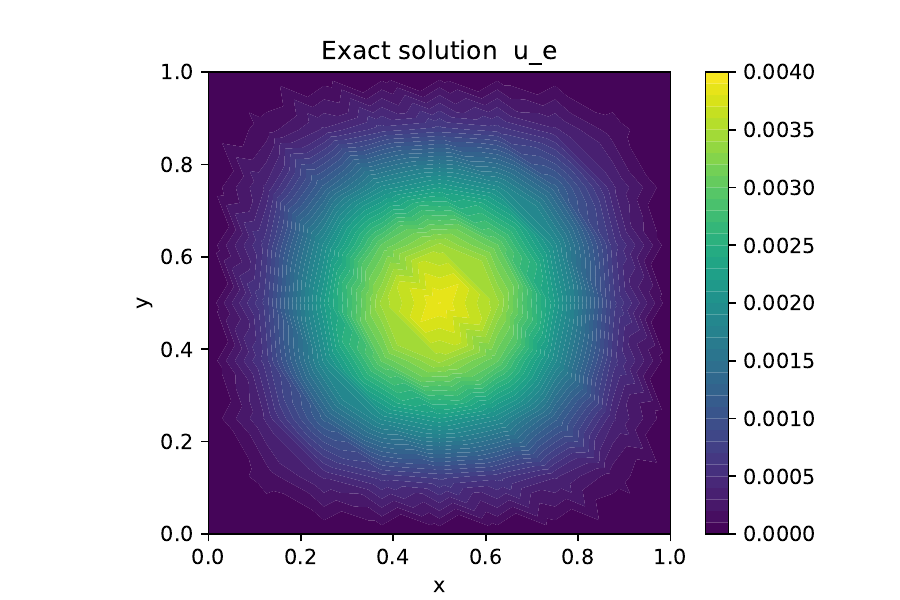}  
		\caption{Exact solution}
		\label{fig:Exct_solu}
	\end{minipage}\hfill
	\begin{minipage}{0.45\textwidth}
		\centering
		\includegraphics[width=\textwidth]{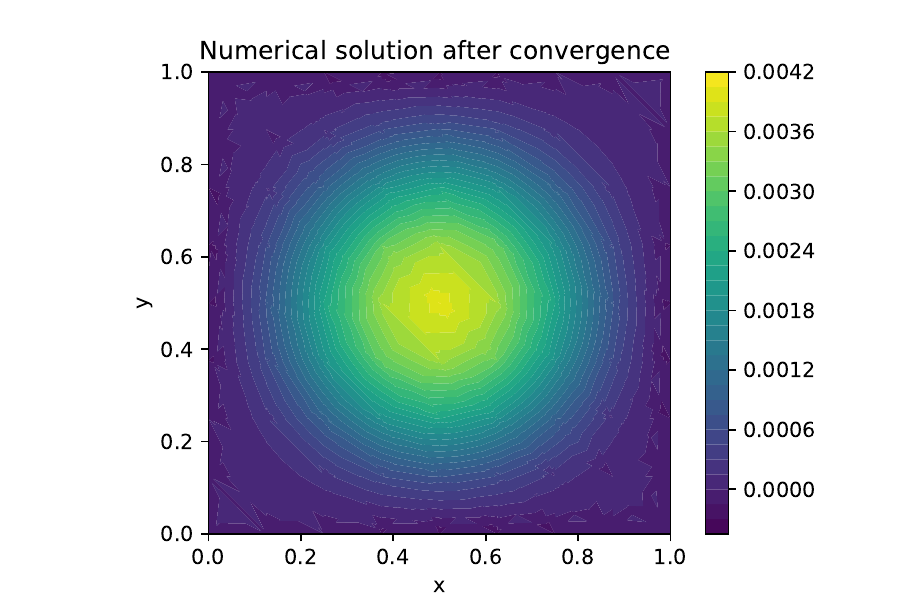}  
		\caption{Numerical solution}
		\label{fig:Numerical_sol}
	\end{minipage}
\end{figure}
\subsection{Test for a priori estimation}\label{priori_estimation}
For error estimation, the errors are defined as:
\begin{eqnarray*}
err_{L2}=\|u-u^{n+1}_h\|_{L^2(\Omega)},\\
err_{H2}=|u-u^{n+1}_h|_{2,\Omega},\text{ where }\\
\|u\|_{L^2(\Omega)}=\left(\int_{\Omega}|u|^2dx\right)^{1/2} \text{ and }\\
|u|_{2,\Omega}=\left(\sum_{i,j=1}^{n}\left\| \dfrac{\partial^2u}{\partial x_i\partial x_j}\right\| ^2_{L^2(\Omega)}\right)^{1/2}.\\
\end{eqnarray*}
The mapping $u \mapsto \|\Delta u\|_{L^2(\Omega)}$ is a norm equivalent to $|\cdot|_{2,\Omega}$ on $H^2_0(\Omega)$, and the latter is the one used for the computations.
The curve in Figure \ref{fig:L2_error} represents the absolute error in the $L^2(\Omega)$ norm, while the curve in Figure \ref{fig:H2_error} represents the absolute error in the $H^2_0(\Omega)$ norm as a function of the mesh size $h$. We tested the algorithm on meshes with $384$ cells and $3840$ degrees of freedom, up to meshes with $1536$ cells and $15360$ degrees of freedom, for $\lambda = 10^4$ and $p = 1$.
\begin{figure}[H]
	\centering
	\begin{minipage}{0.45\textwidth}
		\centering
		\includegraphics[width=\textwidth]{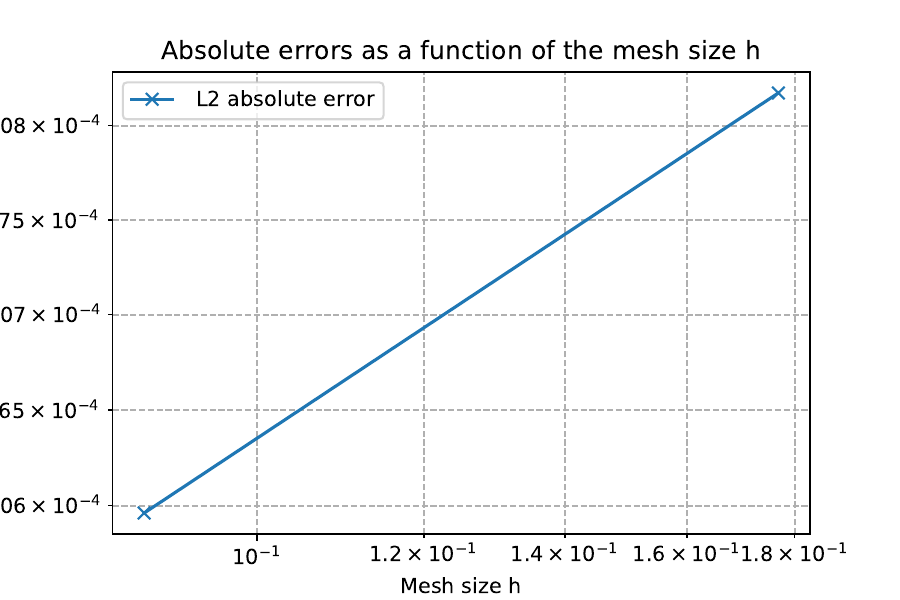} 
		\caption{L2 absolute error curve $err_{L2}$.}
		\label{fig:L2_error}
	\end{minipage}\hfill
	\begin{minipage}{0.45\textwidth}
		\centering
		\includegraphics[width=\textwidth]{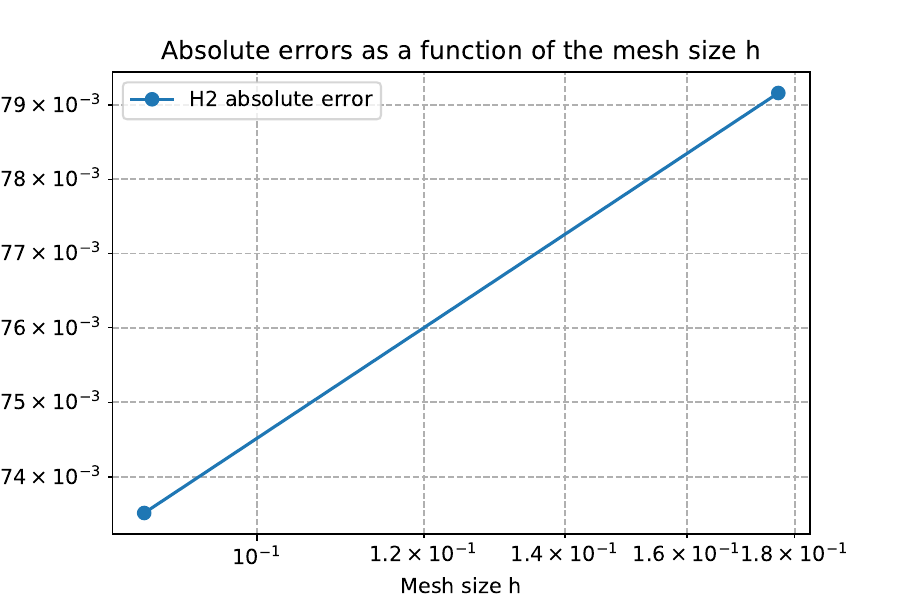}
		\caption{H2 absolute error curve $err_{H2}.$}
		\label{fig:H2_error}
	\end{minipage}
\end{figure}
\brmq We observe that the error decreases as the discretization step tends to zero, with a slope of order $2.05$. This confirms the theoretical results and the second-order convergence established theoretically in the case of HCT meshes.
\ermq
\begin{table}[H]
	\begin{tabular}{|c|c|c|c|c|}
		\hline
		&$p=1$&$p=2$&$p=3$&$p=10$\\
		\hline
		\small$\lambda=10^{-1}$&\small$0.05791626760487744$&\small$0.05791626760385732$&\small$0.05791626760385092$&\small$0.0579162676038548$\\
		\hline
		\small$\lambda=10^{2}$&\small$0.057916267600419936$&\small$0.057916267603858866$&\small$0.05791626760385854$&\small$0.0579162676038548$\\
		\hline
		\small$\lambda=10^{3}$&\small$0.057916267565529256$&\small$0.057916267603424186$&\small$0.05791626760385557$&\small$0.0579162676038548$\\
		\hline
		\small$\lambda=10^{4}$&\small$0.05791626721615341$&\small$0.057916267604456347$&\small$0.05791626760386138$&\small$0.0579162676038548$\\
		\hline
		\small$\lambda=10^{5}$&\small$0.05791626372236575$&\small$0.05791626760443946$&\small$0.057916267603848444$&\small$0.0579162676038548$\\
		\hline
		\small$\lambda=10^{6}$&\small$0.05791622878463318$&\small$0.057916267603997006$&\small$0.057916267603853336$&\small$0.0579162676038548$\\
		\hline
		\small$\lambda=10^{7}$&\small$0.057915879422095966$&\small$0.0579162676001085$&\small$0.05791626760385226$&\small$0.0579162676038548$\\
		\hline
		\small$\lambda=10^{8}$&\small$0.05790517600316137$&\small$0.057916267561317444$&\small$0.05791626760312156$&\small$0.0579162676038548$\\
		\hline
		\small$\lambda=10^{9}$&\small$div$&\small$0.05791626717354878$&\small$0.05791626760378498$&\small$0.0579162676038548$\\
		\hline
		\small$\lambda=10^{10}$&\small$div$&\small$0.057916263295906455$&\small$0.05791626760432593$&\small$0.0579162676038548$\\
		\hline
	\end{tabular}
     \vspace{0.2cm}
	\caption{$err_{H2}=|u-u^{n+1}_h|_{2,\Omega}$}	
	\label{Tab2}
\end{table}
 We denote 'div' when the scheme does not converge after the maximum number of iterations.\\
The table \ref{Tab2} shows that the convergence of the iterative scheme indeed depends on the values of $p$ and $\lambda$. This confirms the theoretical result established in Theorem \ref{thm1}.
\subsection{Test for a posteriori estimation}\label{posteriori_estimation}
In this case, we consider the synthetic solution $u(x,y)=\sin(\pi x)\sin(\pi y)$ defined on the domain $\Omega=(0,1)^2$, which satisfies the boundary conditions.
We implement the iterative fixed-point scheme \ref{P_n}.
Let $u^{n+1}_h$ be the unique solution of \ref{P_n}.
Then, the a posteriori error indicators are locally defined by:
\begin{equation}
\eta^{(D)}_n:=\left(\sum_{K\in \mathcal{T}_h}[\eta^{(D)}_{K,n}]^2\right)^{1/2},
\end{equation}
\begin{equation}
\eta^{(L)}_n:=\left(\sum_{K\in \mathcal{T}_h}[\eta^{(L)}_{K,n}]^2\right)^{1/2},
\end{equation}
where $\eta^{(D)}_{K,n}= \displaystyle h^2_{K}\|f_h-\Delta^2 u^{n+1}_h-\lambda|u^n_h|^{2p}u^n_h\|_{L^2(K)}\\\\+\displaystyle\sum_{e\in \varepsilon(K)}\|\Delta u^{n+1}_h\|_{L^2(e)}
+\displaystyle\frac{1}{2}\sum_{e\in \varepsilon(K)}h^{3/2}_{e}\|[\frac{\partial(\Delta u^{n+1}_h)}{\partial n}]\|_{L^2(e)}$, denotes the discretization error indicator, while  $\eta^{(L)}_{K,n}=\lambda h^2_K|u^n_h-u^{n+1}_h|_{2,K}$ represents the linearization error indicator.
Let $\gamma$ be a positive parameter. We introduce a new linearization stopping criterion used in \cite{Jad} as follows: 
\begin{equation} \label{C11} \eta^{(L)}_n \leqslant \gamma \eta^{(D)}_n \end{equation} and the classical stopping criterion
 \begin{equation} \label{C22} \eta^{(L)}_n \leqslant 10^{-5}. \end{equation} Next, we choose an arbitrary initial approximation $u^0_h$, and we introduce the following iterative algorithm: For $n\in \mathbb{N}$.
\begin{enumerate}
	\item Given $u^n_h$
	\begin{enumerate}
		\item we solve the problem to compute $u^{n+1}_h$.
		\item we calculate $\eta^{(D)}_n$ and $\eta^{(L)}_n$.
	\end{enumerate}
	\item If the linearization error satisfies the stopping criteria \ref{C1} or \ref{C2}, we terminate the iteration loop and proceed to step $(3)$; otherwise, we repeat step $(1)$.
	\item For mesh adaptation: 
	\begin{enumerate}
		\item If $\eta^{(D)}_n$ is below a specified tolerance $\varepsilon$, we stop the algorithm,
		\item otherwise, we perform mesh adaptation, which can be described as follows: Let $ \displaystyle\bar{\eta_n}^{(D)} =\frac{1}{n_t}\sum_{K \in \mathcal{T}_h} \eta^{(D)}_{K,n}$ (where $n_t$ is the number of triangles in the mesh) on a triangle $K$ of the mesh,
		\begin{enumerate}
			\item if $\eta^{(D)}_{K,n}$ is much smaller than $\bar{\eta_n}^{(D)}$, we coarsen the mesh around $K$.
			\item if $\eta^{(D)}_{K,n}$ is much larger than $\bar{\eta_n}^{(D)}$, we refine the mesh around $K$.
		\end{enumerate}
		\item Subsequently, we return to step $(1)$.
	\end{enumerate}
\end{enumerate}
Using the criteria based on the local error indicators  \ref{C11}  and   \ref{C22}, we achieved convergence, and figure \ref{fig:Numerical} displays the numerical solution after convergence compared to the exact solution presented in figure \ref{fig:Exct}. This result is very satisfactory for $\lambda=10^6$ and $p=1$, highlighting once again the effectiveness of local error indicators in the numerical resolution of PDEs. Moreover, figure \ref{fig:error} presents the curve describing the evolution of the two errors and the two error indicators as a function of the discretization step size. Table \ref{Tab5} shows the repartition of $err_{H2}$ and $\eta^{(D)}_{n}$ values as a function of $h$.
\begin{figure}[H]
	\centering
	\begin{minipage}{0.45\textwidth}
		\centering
		\includegraphics[width=\textwidth]{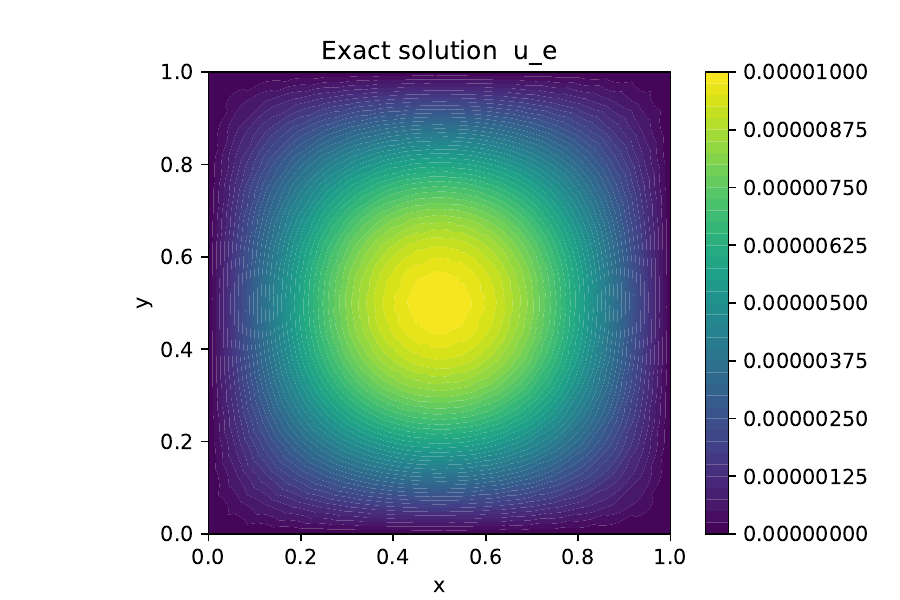}  
		\caption{Exact solution}
		\label{fig:Exct}
	\end{minipage}\hfill
	\begin{minipage}{0.45\textwidth}
		\centering
		\includegraphics[width=\textwidth]{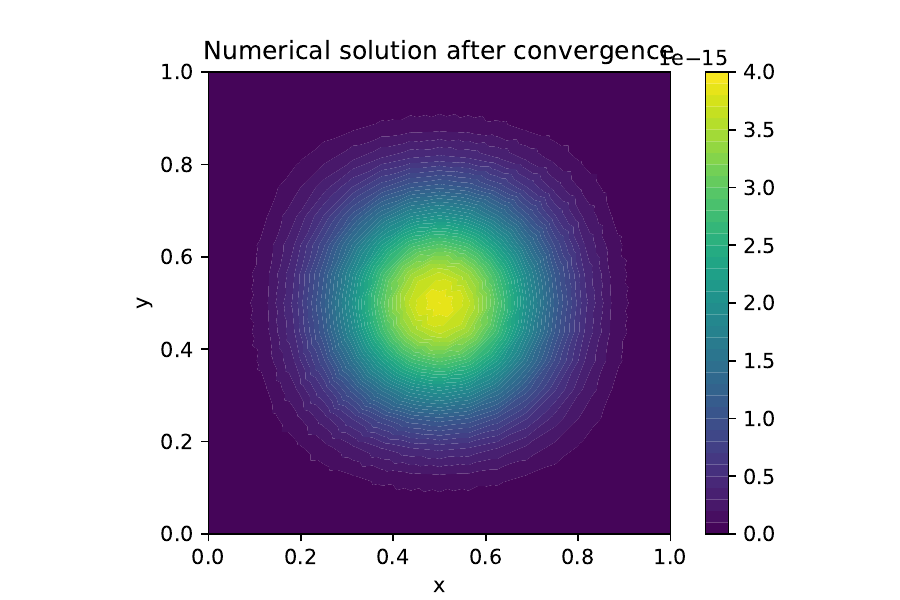}  
		\caption{Numerical solution}
		\label{fig:Numerical}
	\end{minipage}
\end{figure}
\begin{figure}[H]
	\includegraphics[width=0.5\textwidth]{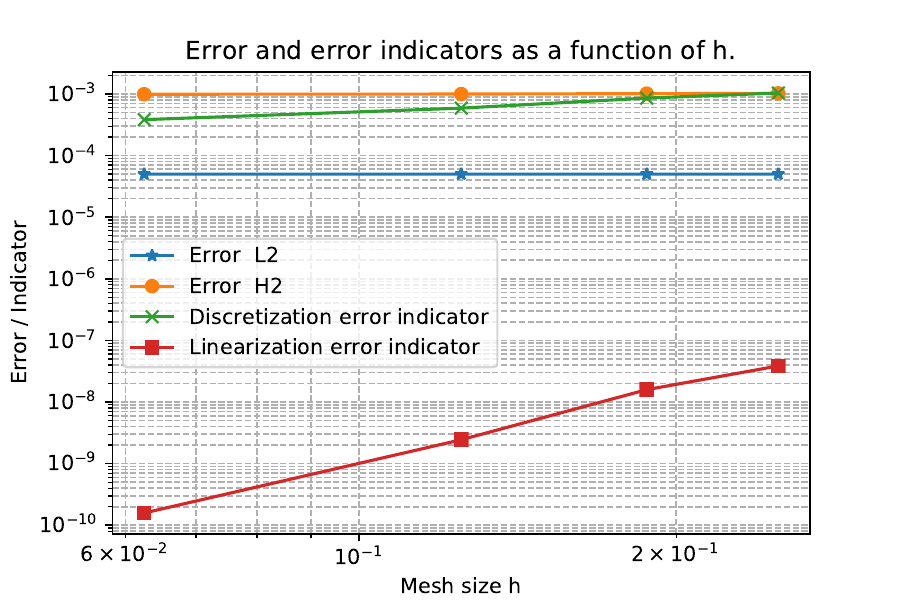} 
	\caption{Curve Error/ Indicator}
	\label{fig:error}
\end{figure}
\begin{table}[H]
	\begin{tabular}{|c|c|c|c|c|}
		\hline
		\small$h$&\small$err_{H2}=|u-u^{n+1}_h|_{2,\Omega}$&\small$\eta^{(D)}_{n}$&\small$\eta^{(L)}_{n}$&\small$
		\dfrac{\left([\eta^{(D)}_{n}]^2+[\eta^{(L)}_{n}]^2\right)^{\frac{1}{2}}}{err_{H2}}$\\
		\hline
		\small$0.25$&\small$0.0010227068705228558$&\small$0.0010359541676476713$&\small$4.7967065729477115e-08$&\small$1.0129531722306075$\\
		\hline
		\small$0.1875$&\small$0.0010103485949389303$&\small$0.0008563221777400873$&\small$3.145699344479001e-08$&\small$0.8475512141130194$\\
		\hline
		\small$0.125$&\small$0.000996339947451816$&\small$0.0005881536238968676$&\small$1.2210650637519503e-08$&\small$0.590314205033984$\\
		\hline
		\small$0.0625$&\small$0.0009893373045555425$&\small$0.00038291768460119846$&\small$3.159212355349324e-09$&\small$0.3870446235586514$\\
		\hline
	\end{tabular}
\vspace{0.2cm}
	\caption{Repartition of $err_{H2}$ values according $\eta^{(D)}_{n}$ and $\eta^{(L)}_{n}$}
	\label{Tab5}
\end{table}
\vspace{0.5cm}
\brmq
We observe that the results obtained in Figure \ref{fig:error} and Table \ref{Tab5} confirm the theoretical results established, demonstrating the reliability and efficiency of the family of local indicators.
\ermq
\subsection{Mesh adaptation}\label{adapt}
In this subsection, we illustrate the evolution of the mesh adaptation. Starting from an initial HCT mesh (Figure \ref{fig:Initial_mesh_HCT}), Figures \ref{fig:Adapted_mesh1}, \ref{fig:Adapted_mesh2}, and \ref{fig:Adapted_mesh3} show the progression of the mesh over the iterations. A significant concentration of mesh adaptation is observed in the region where the solution is non-zero.
\begin{figure}[H]
	\centering
	\begin{minipage}{0.45\textwidth}
		\centering
		\includegraphics[width=\textwidth]{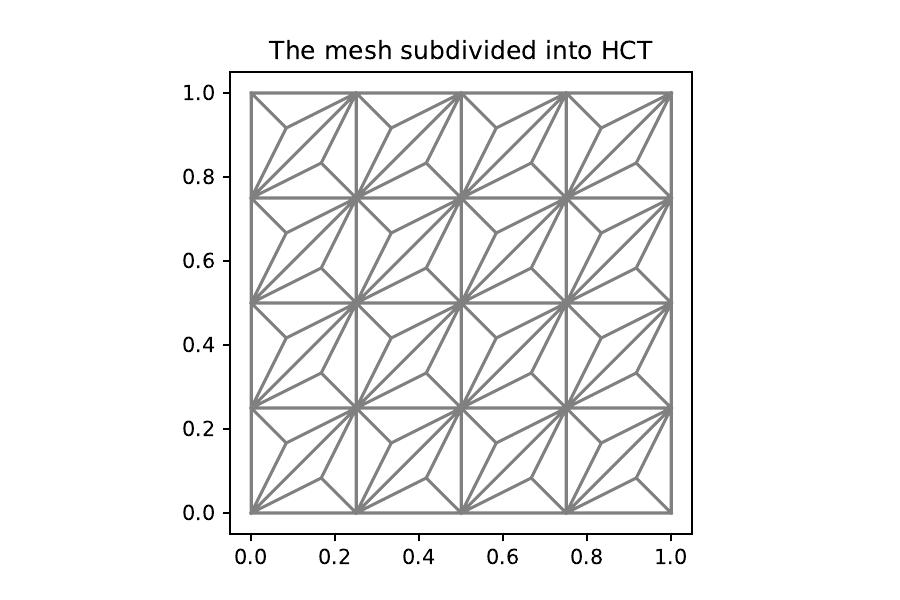}
		\caption{Initial mesh HCT: 96 cells}
		\label{fig:Initial_mesh_HCT}
	\end{minipage}\hfill
	\begin{minipage}{0.45\textwidth}
		\centering
		\includegraphics[width=\textwidth]{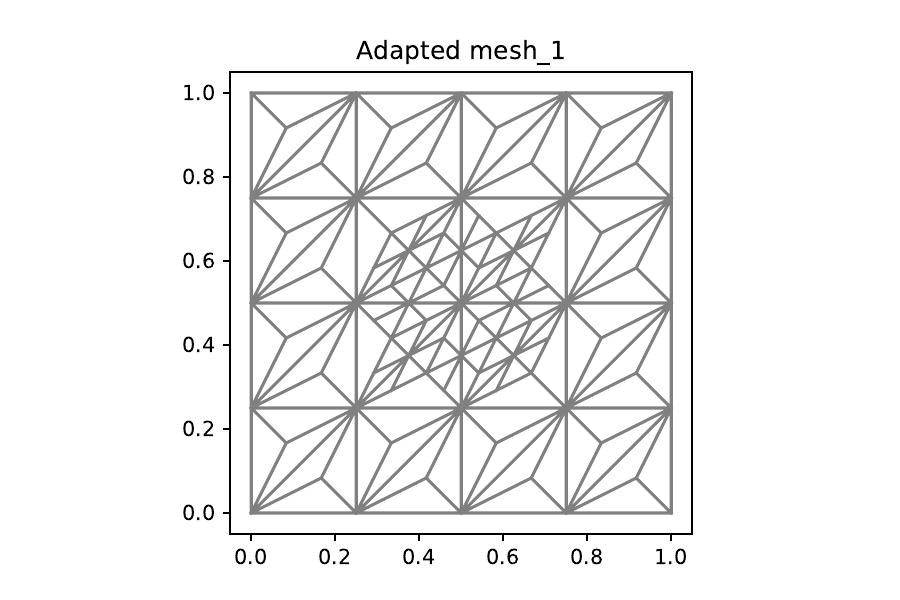}
		\caption{First adapted mesh: 144 cells}
		\label{fig:Adapted_mesh1}
	\end{minipage}
\end{figure}
\begin{figure}[H]
	\centering
	\begin{minipage}{0.45\textwidth}
		\centering
		\includegraphics[width=\textwidth]{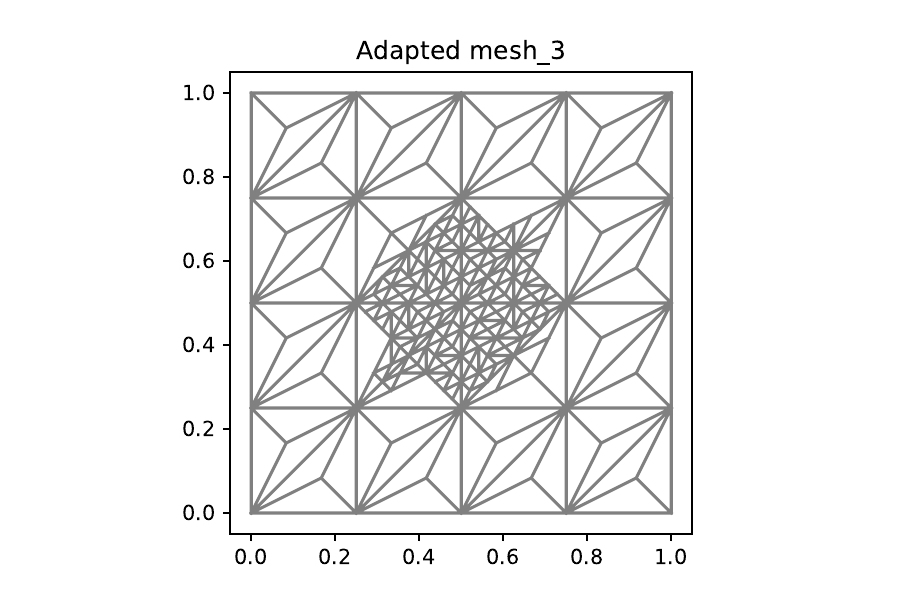}
		\caption{Second adapted mesh: 300 cells }
		\label{fig:Adapted_mesh2}
	\end{minipage}\hfill
	\begin{minipage}{0.45\textwidth}
		\centering
		\includegraphics[width=\textwidth]{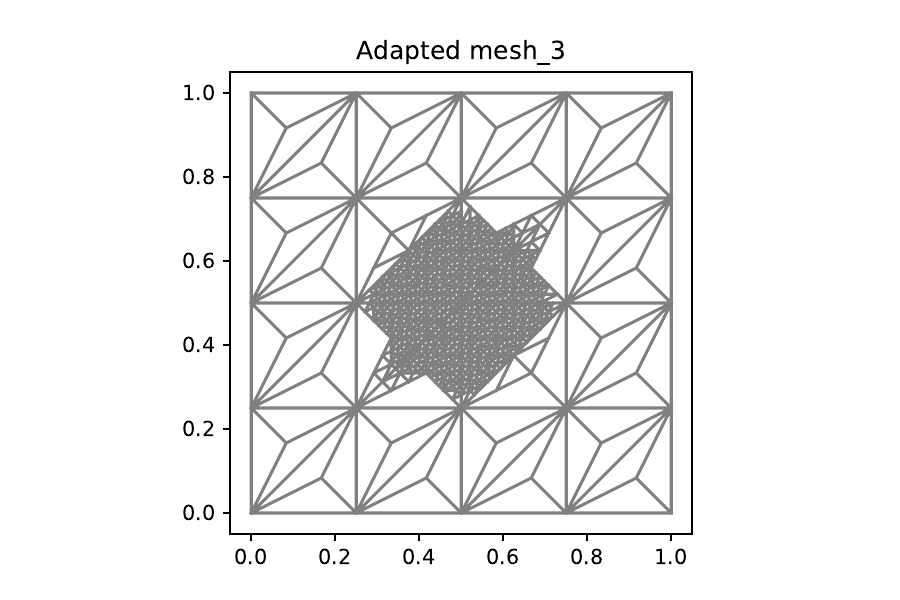}
		\caption{Third adapted mesh: 894 cells}
		\label{fig:Adapted_mesh3}
	\end{minipage}
\end{figure}
\subsection{Discussions}
\label{resultats}
In this subsection, we present the observations from the numerical tests and provide a discussion around them. 
The initial mesh (Figure \ref{fig:Initial_mesh}) is a triangulation of the domain $\Omega=]0;1[^2$ and the figure (Figure \ref{fig:HCT_mesh}) is the HCT mesh generated from this initial triangulation, while the adapted meshes(Figures \ref{fig:Adapted_mesh1} to \ref{fig:Adapted_mesh3}) show a strong concentration in the areas where the solution presents significant variations. The obtained numerical solution (Figures \ref{fig:Numerical_sol} and \ref{fig:Numerical}) is visually close to the exact solution (Figures \ref{fig:Exct_solu} and \ref{fig:Exct}), with a rapid convergence after a few iterations. The error curves (Figures \ref{fig:L2_error} and \ref{fig:H2_error}) show that the errors decrease monotonically with the mesh step $h$, and that of the figure (Figure\ref{fig:error}) presents the correlation between the error indicators and the errors. The analysis of the table \ref{Tab2} reveals 
that the values of the error $|u-u_h^{n+1}|_{2,\Omega}$ show a notable dependence on the parameters $p$ and $\lambda$ and that of the table \ref{Tab5} presents the distribution of the error indicators $ \eta^{(D)}_n $ and of the errors $ |u-u^{n+1}_h|_{2,\Omega}$ which shows a strong decrease with the mesh refinement.
  
The numerical results presented in this study highlight several key aspects that demonstrate their importance, both theoretically and practically. We can discuss:
\begin{enumerate}
	\item Validation of theoretical predictions:
	The results confirm the theoretical predictions on second-order convergence for HCT meshes. This validation is essential because it ensures that the adopted methodology (HCT elements, error indicators, iterative scheme) is consistent with the established theoretical properties.
	\item Effectiveness of a posteriori error estimators:
	The a posteriori error estimators, $\eta^{(D)}_n$ and $\eta^{(L)}_n$, were found to be crucial in establishing a stopping criterion based on the balance between discretization and linearization errors that proved robust in handling iterative convergence. They allowed identifying regions requiring mesh refinement. The efficient use of these indicators reduces computational costs while maintaining high accuracy, which is essential for complex nonlinear problems.
	\item Impact of mesh adaptation:
	Mesh adaptation shows a concentration of cells in areas where the solution presents strong variations. This behavior is illustrated by the evolution of the mesh figures. The approach guarantees an optimal allocation of computational resources, making the method competitive even for complex problems.
\end{enumerate}
\section{Summary }\label{Conclusion}
We analyzed a posteriori residual error estimators of a fixed point iterative algorithm for the plate bending problem perturbed by a nonlinear term.
This analysis allowed us to identify two sources of error: the discretization error and the linearization error.
The local error indicators obtained are both reliable and efficient. The numerical results obtained fully corroborate the theoretical predictions, thereby validating the coherence and relevance of the proposed model. They demonstrate the efficiency, accuracy, and robustness of the developed method. Moreover, they provide strong experimental validation of the theoretical concepts, positioning this approach as a promising solution for complex nonlinear modeling problems. Many issues remain to be addressed in this area, let us mention other methods, namely, \emph{ADINI's element} \cite{Aniso} or \emph{nonconforming VIRTUAL element method}  for plate bending problems recently developed in \cite{MEV}.
\section{Acknowledgments}
We are thankful to the editor and the anonymous reviewers for many valuable suggestions to improve this paper.
\newcommand{\noopsort}[1]{}

\end{normalsize}



\end{document}